\newcommand{\R}{{\mathbb R}}
\newcommand{\be}{\begin{eqnarray}}
\newcommand{\ben}{\begin{eqnarray*}}
\newcommand{\en}{\end{eqnarray}}
\newcommand{\enn}{\end{eqnarray*}}
\newcommand{\pa}{\partial}
\newcommand{\curl}{{\rm curl\,}}
\newcommand{\ggrad}{{\rm grad\,}}
\newcommand{\ddiv}{{\rm div\,}}
\newcommand{\Om}{\Omega}
\newcommand{\om}{\omega}
\newcommand{\hx}{\hat{x}}
\newtheorem{theorem}{Theorem}[section]
\begin{document}
\title{\bf A quantitative sampling method for elastic and electromagnetic sources}
\author{
Xiaodong Liu\thanks{State Key Laboratory of Mathematical Sciences, Academy of Mathematics and Systems Science,
Chinese Academy of Sciences, Beijing 100190, China. Email: xdliu@amt.ac.cn}
,\and
Qingxiang Shi\thanks{Corresponding author. Yau Mathematical Sciences Center, Tsinghua University,
Beijing 100084, China. Email: sqxsqx142857@tsinghua.edu.cn}
}
\date{}
\maketitle

\begin{abstract}
This work is dedicated to a novel sampling method for accurately reconstructing elastic and electromagnetic sources from the far field patterns. We show that the proposed indicators in the form of integrals with full far field patterns are exactly the source functions. These facts not only give constructive uniqueness proofs of the inverse source problems, but also establish the theoretical basis of the proposed sampling methods. Furthermore, we derive the stability estimates for the corresponding discrete indicators using the far field patterns with finitely many observations and frequencies.
We have also proposed the indicators with partial far field patterns and proved their validity for providing the derivative information of the unknown sources. Numerical examples are presented to verify the accuracy and stability of the proposed quantitative sampling method. 

\vspace{.2in}
{\bf Keywords:}  Inverse source problems, far field pattern, sampling method 

\vspace{.2in} {\bf AMS subject classifications:}
35P25, 78A46, 74B05
\end{abstract}

\section{Introduction}
Inverse source problems (ISPs) are concerned with the reconstruction of an unknown source from some kind of measurement induced by source. The ISPs arise in many important scientific and industrial applications including medical tomography \cite{apply-at-2007,apply-at-2009}, antenna technology \cite{apply-at-1991,apply-at-1982} and seismic monitoring \cite{apply-at-1986}. 
In the last decades, many works have been done in dealing with the multi-frequency measurements about uniqueness and stability \cite{full-thm-2015,full-thm-2016,full-thm-2017,full-thm-2020}  as well as numerical methods, for example, the Recursive algorithm \cite{full-num-recu}, the Fourier method \cite{full-num-fouri-Maxwell-1,full-num-fouri,full-num-fouri-Maxwell-2,full-num-fouri-Elastic}, the Variational Bayes's approach \cite{full-num-learn} and the sampling type methods \cite{spar-2020, full-num-facto, JiLiu-sisc2019,JiLiu-siap2021,spar-2023LiLiu,spar-2023,LiuMeng-csiam2023}. 

Among various techniques for object reconstruction, the sampling type methods have many advantages, for example, low computational complexity, easy implementation, strong robustness and less dependence on the prior physical/geometric information. However, most of the sampling type methods are qualitative methods for inverse scattering problems. The objective is often not to give a complete description (e.g, the precise value of the source function or the refractive index of an inhomogeneity) of the scatterer, but rather to determine certain features (e.g. an approximation of the location and shape) of the scatterer. We refer to \cite{spar-2020, full-num-facto, JiLiu-siap2021, spar-2023LiLiu, spar-2023}, where the source support is roughly reconstructed. 
A highlight of these direct sampling methods is that the measurements are only taken at finitely many sensors, which is attractive in many practical cases. Note that the rough reconstructions provide good initial guess for optimization techniques. Therefore, for many practical applications the combination of qualitative methods and optimization methods is expected to produce high precision reconstructions.

The main contribution of this paper is to design a quantitative sampling method for accurately reconstructing the  source functions. 
For point sources, some indicators are designed in the recent works \cite{JiLiu-sisc2020,spar-2023LiLiu,spar-2023,spar-point-2023} to determine all the information, i.e., the number, their locations and the scattering strengths. In other words, the corresponding sampling methods are quantitative methods that can be used to identify the point sources. However, an important a priori information is that the point sources are Dirac measures, which is actually a key for the successful numerical simulations. Therefore, one can not use such quantitative methods for a general source. We thus have to design novel quantitative sampling method for identifying a general source function. We want to remind the other quantitative method named the Fourier method \cite{full-num-fouri-Maxwell-1,full-num-fouri,full-num-fouri-Maxwell-2,full-num-fouri-Elastic} which is based on an observation that the coefficients of the Fourier expansion of the source function are the far field patterns with some specific observation directions and frequencies. However, instead of using  equispaced data with respect to the products of observations and frequencies, our sampling method makes use of the far field patterns with simply chosen observations and frequencies. 

We take the elastic and electromagnetic sources for our modalities since both of them involves scattered fields in the form of vector fields. The corresponding uniqueness and stability estimates for the inverse sources scattering problems have recently been established in \cite{stability-elastic-electromagnetic, non-sou-EM}. 
Our novel quantitative sampling method makes use of the full or partial far field patterns of the scattered fields. 
The theoretical basis of the novel sampling method for stably computing the source functions will be established. Precisely, for elastic source problem, we design an indicator function and prove that the source can be reconstructed accurately by the indicator function with full (compressional and shear) far field patterns. When only compressional or shear far field patterns are available, we also design the indicators to reconstruct some first-order derivatives information of the source function. 
Furthermore, for electromagnetic source problem with given free charge density, we design an indicator function with electric far field patterns to reconstruct the source accurately. If the free charge density is unknown, another indicator with magnetic far field patterns is designed to reconstruct the curl of the source function. 
Theoretically, to identify the extended source function, the far field patterns have to be taken at all the observation directions. However, from the numerical point of view, one has to consider the indicators in the form a finite sum with finitely many observation directions and frequencies. To verify the effectiveness of our method, we derive the corresponding stability estimates for these discrete indicators. 
Numerous numerical examples show that our method stably provides high resolution reconstructions even if there are only dozens of directions and frequencies.

This paper is organized as follows. In section $2$, we recall the direct elastic source scattering problem, introduce the indicators for identifying the elastic sources and prove their theoretical basis. The analogous results for electromagnetic sources are studied in Section $3$. The stability estimates for the discrete indicators are established in  Section $4$. Numerical experiments are then presented in the final section, showing the validity and robustness of the quantitative sampling method.

\section{A quantitative sampling method for elastic sources}

\subsection{Direct elastic source scattering problems}
We begin with some notations in $\R^2$. For $x=(x_1,x_2)^T\in\mathbb R^2$, we define $\hx^{\perp}$ obtained by rotating $\hat{x}:=x/|x|$ anticlockwise by $\pi/2$ as well as two auxiliary differential operators $\ggrad^{\perp}$ and $\ddiv^{\perp}$ by
$\ggrad^{\perp}:= (-\pa_2, \pa_1)^T$ and $\ddiv^{\perp}:= (-\pa_2, \pa_1)$, respectively.

Without loss of generality, we always assume that the density of the elastic medium is $\rho=1$. For a source $S\in \left(L^2\left(\mathbb R^n\right)\right)^n$ with compact support $\Omega$ in an isotropic homogeneous medium, the propagation of time-harmonic  wave $u$ solves the Navier equation
\begin{align}
    \label{ela}
    \mu\Delta u +(\lambda+\mu){\rm grad \,div} u+ \omega^2 u=-S\quad{\rm in} \ \mathbb R^n,
\end{align}
where $\omega$ is the circular frequency, and Lam\'e constants $\lambda$, $\mu$ satisfy
$\mu>0,\ 2\mu+\lambda>0$. Moreover, we denote by
\ben
k_p:=\frac{\omega}{\sqrt{\lambda+2\mu}}\quad\mbox{and}\quad k_s:=\frac{\omega}{\sqrt{\mu}},
\enn
respectively, the compressional wave number and the shear wave number. 
Then the solution $u$ of equation \eqref{ela} has a decomposition in the form
 \begin{align*}
     u=u_p+u_s,
 \end{align*}
where
\begin{equation*}
          u_p:=-\frac{1}{k_p^2}{\rm grad\ div}u\quad\mbox{and}\quad 
          u_s:=\left\{
          \begin{aligned}
          -\frac{1}{k_s^2}{\rm grad}^\perp\ {\rm div}^\perp u  \quad&{\rm in}\ \mathbb R^2,\\
          \frac{1}{k_s^2} {\rm curl\ curl}\ u\quad&{\rm in} \ \mathbb R^3,
          \end{aligned}
          \right.
\end{equation*}
are known as the compressional and shear parts of $u$, respectively.
The scattered fields $u_p$ and $u_s$ satisfy the Kupradze's radiation conditions
\be\label{KupradzeRC}
\frac{\partial u_t}{\partial |x|}-ik_t u_t=o(|x|^{-\frac{n-1}{2}}),\quad |x|\rightarrow\infty, t=p,s,
\en
uniformly in all directions $\hx:=x/|x|\in \mathbb{S}^{n-1}:=\{y\in\R^n\,|\, |y|=1\}$. 
Denoting by $e_j, 1\leq j\leq n$, the unit cartesian vectors in $\mathbb R^n$ we define for $x\in\R^n, x\neq y$, the $jth$ column of $\Gamma(x,y)$ by 
\ben
    \Gamma(x,y)e_j=\left\{
    \begin{aligned}
    &\frac{i H^1_0(k_s|x-y|)}{4\mu}e_j+\frac{i}{4\omega^2}{\rm grad}_x{\rm div}^\perp_x\left\{\left[H^1_0(k_s|x-y|)-H^1_0(k_p|x-y|) \right]e_j\right\}& {\rm in}\  \mathbb R^2,\\
    &\frac{e^{ik_s|x-y|}}{4\pi\mu|x-y|}e_j+\frac{1}{\omega^2}{\rm grad}_x{\rm div}_x\left[ \frac{e^{ik_s|x-y|}-e^{ik_p|x-y|}}{4\pi|x-y|}e_j\right]& {\rm in} \ \mathbb R^3.
    \end{aligned}
    \right.
\enn
where $H^1_0$ denotes the Hankel function of the first kind of order 0. 
Now, we have the representation for the solution of the problem \eqref{ela}-\eqref{KupradzeRC} in the form
\begin{align}
    u(x)=\int_\Omega \Gamma(x,y)S(y)dy,\quad x\in\R^n.
    \label{ source solution}
\end{align}

The inverse problem is to determine the source from a knowledge of the far field patterns. To do so, we collect the representations for the far field patterns.
In $\R^2$, the scattered field $u$ has the following asymptotic behavior
\begin{align*}
    u(x,\om)=\frac{1}{\lambda+2\mu}\frac{e^{i\pi/4}}{\sqrt{8\pi k_p}}\frac{e^{ik_p|x|}}{\sqrt{|x|}}u^\infty_p(\hat{x},\om)+\frac{1}{\mu}\frac{e^{i\pi/4}}{\sqrt{8\pi k_s}}\frac{e^{ik_s|x|}}{\sqrt{|x|}}u^\infty_s(\hat{x},\om)+O\left(\frac{1}{|x|^{3/2}}\right),\quad |x|\to\infty,
\end{align*}
where 
\begin{equation}
\left\{
     \begin{aligned}
      \label{exsin2D}
        u^\infty_p(\hat{x},\om)=&\hat{x}\int_\Omega e^{-ik_p\hat{x}\cdot y}S(y)\cdot\hat{x}dy,\quad \hx\in \mathbb{S}^1,\,\om\in(0,+\infty), \\
    u^\infty_s(\hat{x},\om)=&\hx^\perp\int_\Omega e^{-ik_s\hat{x}\cdot y} S(y)\cdot \hx^\perp dy,\quad \hx\in \mathbb{S}^1,\,\om\in(0,+\infty).
    \end{aligned}
    \right.
\end{equation}
In $\R^3$, the scattered field $u$ has the following asymptotic behavior
\begin{align*}
    u(x,\om)=\frac{1}{4\pi(\lambda+2\mu)}\frac{e^{ik_p|x|}}{|x|}u^\infty_p(\hat{x},\om)+\frac{1}{4\pi\mu}\frac{e^{ik_s|x|}}{|x|}u^\infty_s(\hat{x},\om)
            +O\left(\frac{1}{|x|^{2}}\right),\quad |x|\to\infty,
\end{align*}
where
\begin{equation}
\left\{
    \begin{aligned}
\label{exsin3D}
     u^\infty_p(\hat{x},\om)=&\hat{x}\int_\Omega e^{-ik_p\hat{x}\cdot y} S(y)\cdot\hat{x} dy,\quad \hx\in \mathbb{S}^2,\,\om\in(0,+\infty),\\
    u^\infty_s(\hat{x},\om)=&\int_\Omega e^{-ik_s\hat{x}\cdot y}  \hat{x}\times(S(y)\times\hat{x})dy,\quad \hx\in \mathbb{S}^2,\,\om\in(0,+\infty).
\end{aligned}
\right.
\end{equation}
Note that $u^\infty_p$ and $u^\infty_s$ are known as, respectively, the compressional and shear far field pattern of $u$. 
Both $u_p^\infty$ and $u_s^\infty$ are defined in the form of a vector function and are naturally decoupled. 
Denote by  $u^\infty(\hx,\om):=[u^\infty_p(\hat{x},\om),u^\infty_s(\hat{x},\om)]$, $\hx\in\mathbb S^{n-1},\,\om\in(0,+\infty)$, the full far field patterns.

\subsection{Indicators with full or partial far field patterns for elastic sources}
Given the far field patterns $u^\infty(\hx,\om):=[u^\infty_p(\hat{x},\om),u^\infty_s(\hat{x},\om)]$, $\hx\in\mathbb S^{n-1},\,\om\in(0,+\infty)$, we introduce  three indicator functions $\mathcal{I}_f$, $\mathcal{I}_p$ and $\mathcal{I}_s$ by
\be
    \label{indicator-f}
    &&\mathcal{I}_f(z):=
    \frac{1}{(2\pi)^n}\int_{\mathbb S^{n-1}}\int_0^{+\infty}\left[u_p^{\infty}(\hx,\omega)+u_s^{\infty}\left(\hx,\frac{k_p}{k_s}\omega\right)\right]e^{ik_p\hx\cdot z}\frac{\omega^{n-1}}{(\lambda+2\mu)^{n/2}}d\omega ds_{\hx},\quad z\in\mathbb R^n,\qquad
    \\
    \label{indicator-p}
         &&\mathcal{I}_p(z):=
         \frac{i}{(2\pi)^n}\int_{\mathbb S^{n-1}}\int_0^{+\infty}\left(u_p^{\infty}(\hx,\omega)\cdot\hx\right)e^{ik_p\hx\cdot z} \frac{k_p^n}{\sqrt{\lambda+2\mu}}d\omega ds_{\hx},\quad z\in\mathbb R^n,
\en
and
\be
     \label{indicator-s}
        \mathcal{I}_s(z):=
        \left\{
        \begin{aligned}
            &\frac{i}{(2\pi)^2}\int_{\mathbb S^{1}}\int_0^{+\infty}\left(u_s^{\infty}(\hx,\omega)\cdot \hx^{\perp}\right)e^{ik_s\hx\cdot z}\frac{k_s^{2}}{\sqrt{\mu}}d\omega ds_{\hx},\quad z\in\mathbb R^2,\\
            &\frac{i}{(2\pi)^3}\int_{\mathbb S^{2}}\int_0^{+\infty}\left(u_s^{\infty}(\hx,\omega)\times\hx\right)e^{ik_s\hx\cdot z}\frac{k_s^{3}}{\sqrt{\mu}}d\omega ds_{\hx},\quad z\in\mathbb R^3,
        \end{aligned}   
        \right.
\en
respectively.

Here and throughout this paper, for any vector field $U\in\left(L^2(\mathbb R^n)\right)^n$, we use $\mathcal{F}$ and $\mathcal{F}^{-1}$ to denote its Fourier transform and inverse Fourier transform by
\ben
\mathcal{F}[U](y):=\int_{\R^n} U(x)e^{-ix\cdot y}dx
\quad\mbox{and}\quad
\mathcal{F}^{-1}[U](y):=\frac{1}{(2\pi)^n}\int_{\R^n} U(x)e^{ix\cdot y}dx, \quad y\in\R^n,
\enn
respectively.
\begin{theorem}\label{thm-f}
    Let $S\in\left(L^2(\mathbb R^n)\right)^n$, then
    \be\label{IfS}
    \mathcal{I}_f=S.
    \en
\end{theorem}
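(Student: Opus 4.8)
The plan is to recognize the indicator $\mathcal{I}_f$ as an inverse Fourier transform in disguise. The key observation is that the far field formulas \eqref{exsin2D}–\eqref{exsin3D} express $u_p^\infty(\hx,\omega)$ and $u_s^\infty(\hx,\omega)$ in terms of Fourier-type integrals of $S$ evaluated at the frequency vector $k_p\hx$ (resp.\ $k_s\hx$), multiplied by a projection onto $\hx$ (resp.\ onto $\hx^\perp$ in $\R^2$, or onto the plane orthogonal to $\hx$ in $\R^3$). So the first step is to substitute these formulas into \eqref{indicator-f}. Because of the frequency shift $\omega \mapsto \frac{k_p}{k_s}\omega$ applied to the shear term, the shear wave number in $u_s^\infty(\hx,\frac{k_p}{k_s}\omega)$ becomes exactly $k_s \cdot \frac{k_p}{k_s}\omega / \omega \cdot$\dots — more precisely, evaluating $k_s$ at frequency $\frac{k_p}{k_s}\omega$ gives $\frac{1}{\sqrt\mu}\cdot\frac{\omega/\sqrt{\lambda+2\mu}}{1/\sqrt\mu}=\frac{\omega}{\sqrt{\lambda+2\mu}}=k_p(\omega)$. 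Thus both terms carry the same exponential $e^{-ik_p\hx\cdot y}$, which matches the factor $e^{ik_p\hx\cdot z}$ in the indicator and sets up a Fourier inversion in the variable $\xi := k_p\hx$.

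Next I would perform the change of variables from $(\hx,\omega)\in\mathbb S^{n-1}\times(0,\infty)$ to $\xi = k_p\hx \in \R^n$. Since $k_p = \omega/\sqrt{\lambda+2\mu}$, we have $|\xi| = \omega/\sqrt{\lambda+2\mu}$ and the Jacobian of this polar-to-Cartesian map contributes $|\xi|^{n-1}\,d|\xi|\,ds_{\hx}$; rewriting $d\omega = \sqrt{\lambda+2\mu}\,d|\xi|$ and collecting the weight $\frac{\omega^{n-1}}{(\lambda+2\mu)^{n/2}}$ shows that it is precisely the factor needed to produce the Euclidean measure $d\xi$ (up to the $(2\pi)^n$ already present, which supplies the inverse-Fourier normalization). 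The crucial algebraic point to verify carefully is that, under this substitution, the projector structure collapses: in $\R^2$, $\hx(S\cdot\hx) + \hx^\perp(S\cdot\hx^\perp) = S$ since $\{\hx,\hx^\perp\}$ is an orthonormal basis, and in $\R^3$, $\hx(S\cdot\hx) + \hx\times(S\times\hx) = S$ by the standard vector triple-product identity. Hence the integrand becomes exactly $\widehat{S}(k_p\hx)\,e^{ik_p\hx\cdot z}$ times $d\xi$, where $\widehat{S}(\xi) = \int_\Omega e^{-i\xi\cdot y}S(y)\,dy = \mathcal F[S](\xi)$ (using that $S$ is supported in $\Omega$).

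Finally, assembling these pieces gives
\[
\mathcal I_f(z) = \frac{1}{(2\pi)^n}\int_{\R^n}\mathcal F[S](\xi)\,e^{i\xi\cdot z}\,d\xi = \mathcal F^{-1}\big[\mathcal F[S]\big](z) = S(z),
\]
which is \eqref{IfS}. The main obstacle I anticipate is not conceptual but bookkeeping: one must track the constants and the wave-number substitution in the shear term with care, confirming that $k_s$ evaluated at the shifted frequency $\frac{k_p}{k_s}\omega$ coincides with $k_p$ evaluated at $\omega$, and that the weight $\omega^{n-1}/(\lambda+2\mu)^{n/2}$ together with the surface measure assembles into exactly $d\xi$ with the correct $(2\pi)^{-n}$ prefactor; a secondary technical point is justifying that the resulting $\xi$-integral converges and that Fubini applies, which follows from $S\in (L^2(\R^n))^n$ having compact support so that $\mathcal F[S]\in (L^2(\R^n))^n$ and the Fourier inversion theorem is available in the $L^2$ sense.
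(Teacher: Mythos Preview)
Your proposal is correct and follows essentially the same route as the paper's proof: both recognize that the frequency shift in the shear term makes $u_p^\infty(\hx,\omega)+u_s^\infty(\hx,\tfrac{k_p}{k_s}\omega)=\mathcal F[S](k_p\hx)$ via the orthogonal-projection identity, then change variables $\xi=k_p\hx$ so that the weight $\omega^{n-1}/(\lambda+2\mu)^{n/2}\,d\omega\,ds_{\hx}$ becomes $d\xi$ and Fourier inversion yields $S$. The paper presents the computation more tersely (it writes the sum directly as $\mathcal F[S](k_p\hx)$ without spelling out the projector algebra), but the argument is the same.
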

\begin{proof}
    Using \eqref{exsin2D} and \eqref{exsin3D}, we have
    \begin{align*}
        u_p^{\infty}(\hx,\omega)+u_s^{\infty}\left(\hx,\frac{k_p}{k_s}\omega\right)=\int_\Omega e^{-ik_p\hat{x}\cdot y} S(y) dy=\mathcal{F}[S](k_p\hx), \quad \hx\in\mathbb S^{n-1},\,\om\in(0,+\infty).
    \end{align*}
    Inserting this into \eqref{indicator-f}, the statement \eqref{IfS} follows by the straightforward calculations, i.e.,    
    \begin{align*}
        \mathcal{I}_f&=\frac{1}{(2\pi)^{n}}\int_{\mathbb S^{n-1}}\int_0^{+\infty}\mathcal{F}[S](k_p\hx)e^{ik_p\hx\cdot z}\frac{\omega^{n-1}}{(\lambda+2\mu)^{n/2}}d\omega ds_{\hx}\\
        &=\frac{1}{(2\pi)^{n}}\int_{\mathbb S^{n-1}}\int_0^{+\infty}\mathcal{F}[S](k_p\hx)e^{ik_p\hx\cdot z}k_p^{n-1}dk_p ds_{\hx}\\
         &=\frac{1}{(2\pi)^{n}}\int_{\mathbb R^{n}}\mathcal{F}[S](\xi)e^{i\xi\cdot z}d\xi\\
         &=\mathcal{F}^{-1}[\mathcal{F}[S]]\\
         &=S.
    \end{align*}
\end{proof}

Note that the proof of Theorem \ref{thm-f} is also a constructive unique proof of the inverse elastic source scattering problem from the full far field patterns $\{u^{\infty}(\hx,\omega)|\hx\in\mathbb S^{n-1},\omega\in(0,+\infty)\}$. Furthermore, Theorem \ref{thm-f} gives the theoretical basis for the indicator $\mathcal{I}_f$ to reconstruct the source $S\in\left(L^2(\mathbb R^n)\right)^n$ from the full far field patterns $\{u^{\infty}(\hx,\omega)|\hx\in\mathbb S^{n-1},\omega\in(0,+\infty)\}$. 
Practically, we may have only the compressional far field pattern $u^\infty_p$ or the shear far field pattern $u^\infty_s$.
The next theorem shows that we are still able to reconstruct partial information of source $S$ by $\mathcal{I}_p$ or $\mathcal{I}_s$. 

\begin{theorem}\label{thm-ps}
     Let $S\in\left(H^1(\mathbb R^n)\right)^n$, then
     \be\label{IpdivS}
     \mathcal{I}_p={\rm div}S
     \en
     and 
     \be\label{IsdivS}
     \mathcal{I}_s=\left\{
         \begin{aligned}
             {\rm div}^{\perp} S&,\ {\rm if}\ n=2,\\
             -{\rm curl}\  S &,\ {\rm if}\ n=3.
         \end{aligned}\right.
     \en
\end{theorem}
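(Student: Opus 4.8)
The plan is to follow the same route as the proof of Theorem~\ref{thm-f}: rewrite the scalar quantities $u_p^{\infty}(\hx,\om)\cdot\hx$ and $u_s^{\infty}(\hx,\om)\cdot\hx^{\perp}$ (resp.\ the vector $u_s^{\infty}(\hx,\om)\times\hx$) in terms of $\mathcal F[S]$ restricted to the sphere of radius $k_p$ (resp.\ $k_s$), collapse each indicator to a standard inverse Fourier integral over $\R^n$, and then recognise the Fourier symbol of the relevant first-order operator.

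First I would use the first lines of \eqref{exsin2D} and \eqref{exsin3D} to write $u_p^{\infty}(\hx,\om)\cdot\hx=\int_{\Om}e^{-ik_p\hx\cdot y}S(y)\cdot\hx\,dy=\mathcal F[S](k_p\hx)\cdot\hx$, and the second line of \eqref{exsin2D} to write $u_s^{\infty}(\hx,\om)\cdot\hx^{\perp}=\mathcal F[S](k_s\hx)\cdot\hx^{\perp}$ when $n=2$. For $n=3$ one first notes, using the linearity of $v\mapsto\hx\times(v\times\hx)$ and the fact that $\Om$ contains the support of $S$, that $u_s^{\infty}(\hx,\om)=\hx\times\bigl(\mathcal F[S](k_s\hx)\times\hx\bigr)$, and then applies the elementary identity $\bigl(\hx\times(v\times\hx)\bigr)\times\hx=v\times\hx$ (one use of $a\times(b\times c)=b(a\cdot c)-c(a\cdot b)$ followed by $\hx\times\hx=0$) to get $u_s^{\infty}(\hx,\om)\times\hx=\mathcal F[S](k_s\hx)\times\hx$. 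The next ingredient is the list of Fourier symbols, valid for $S\in(H^1(\R^n))^n$ since then ${\rm div}\,S$, ${\rm div}^{\perp}S$, ${\rm curl}\,S\in L^2(\R^n)$: with the sign convention for $\mathcal F$ fixed in the excerpt,
\[
\mathcal F[{\rm div}\,S](\xi)=i\,\xi\cdot\mathcal F[S](\xi),\qquad
\mathcal F[{\rm div}^{\perp}S](\xi)=i\,\xi^{\perp}\cdot\mathcal F[S](\xi),\qquad
\mathcal F[{\rm curl}\,S](\xi)=i\,\xi\times\mathcal F[S](\xi).
\]

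With these in hand I would, exactly as in Theorem~\ref{thm-f}, substitute into \eqref{indicator-p}--\eqref{indicator-s}, change variables $\om\mapsto k_p=\om/\sqrt{\la+2\mu}$ in $\mathcal I_p$ (resp.\ $\om\mapsto k_s=\om/\sqrt{\mu}$ in $\mathcal I_s$), which cancels the constant $\sqrt{\la+2\mu}$ (resp.\ $\sqrt{\mu}$) and leaves the factor $k_p^{\,n}\,dk_p$ (resp.\ $k_s^{\,2}\,dk_s$ in $\R^2$ and $k_s^{\,3}\,dk_s$ in $\R^3$); one power of the radial variable is absorbed into the argument of $\mathcal F[S]$ using the homogeneities $k\hx^{\perp}=(k\hx)^{\perp}$ and $k\,(v\times\hx)=v\times(k\hx)$, and the remaining $k_p^{\,n-1}\,dk_p\,ds_{\hx}$ (resp.\ $k_s^{\,n-1}\,dk_s\,ds_{\hx}$) is recombined into $d\xi$ via $\xi=k_p\hx$ (resp.\ $\xi=k_s\hx$). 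For $\mathcal I_p$ this yields
\[
\mathcal I_p(z)=\frac{1}{(2\pi)^n}\int_{\R^n}\bigl(i\,\xi\cdot\mathcal F[S](\xi)\bigr)e^{i\xi\cdot z}\,d\xi
=\mathcal F^{-1}\bigl[\mathcal F[{\rm div}\,S]\bigr](z)={\rm div}\,S(z),
\]
and the analogous computations give $\mathcal I_s=\mathcal F^{-1}[\mathcal F[{\rm div}^{\perp}S]]={\rm div}^{\perp}S$ when $n=2$, while when $n=3$ the cross product flips sign, $i\,\mathcal F[S](\xi)\times\xi=-i\,\xi\times\mathcal F[S](\xi)=-\mathcal F[{\rm curl}\,S](\xi)$, producing $\mathcal I_s=-{\rm curl}\,S$. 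The leading factor $i$ in \eqref{indicator-p}--\eqref{indicator-s} and the extra power of the frequency are precisely what is needed to reconstitute the symbols $i\xi\cdot(\,\cdot\,)$, $i\xi^{\perp}\cdot(\,\cdot\,)$ or $i\xi\times(\,\cdot\,)$ after the change of variables.

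The computation is otherwise routine; the only points that require care are the $n=3$ vector identity collapsing $u_s^{\infty}\times\hx$ to $\mathcal F[S](k_s\hx)\times\hx$, the exact bookkeeping of the powers of $k_p$, $k_s$ against the Jacobian $k^{\,n-1}$ of the polar change of variables (so that precisely one factor is left over to be absorbed into $\mathcal F[S]$), and the observation that, because $\mathcal F[{\rm div}\,S]$ and its analogues need not be integrable, the final equalities are understood in the $L^2(\R^n)$ sense through Fourier inversion/Plancherel — which is exactly why the hypothesis is strengthened from $(L^2(\R^n))^n$ in Theorem~\ref{thm-f} to $(H^1(\R^n))^n$ here. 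I do not expect any genuine obstacle beyond this bookkeeping.
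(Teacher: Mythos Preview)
Your proposal is correct and follows essentially the same route as the paper's proof: express the far-field contractions via $\mathcal F[S]$, change variables $\omega\mapsto k_p$ or $k_s$, absorb one radial factor into the argument to recover the symbols $i\xi\cdot(\,\cdot\,)$, $i\xi^{\perp}\cdot(\,\cdot\,)$, $i(\,\cdot\,)\times\xi$, and finish with Fourier inversion. If anything you are slightly more explicit than the paper, which skips the $n=3$ vector identity reducing $u_s^{\infty}\times\hx$ to $\mathcal F[S](k_s\hx)\times\hx$ and does not comment on the $L^2$ sense of the final equalities.
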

\begin{proof}
Using \eqref{exsin2D} and \eqref{exsin3D} again, we deduce that
    \begin{align*}
      \mathcal{I}_p(z)
         &=\frac{i}{(2\pi)^{n}}\int_{\mathbb S^{n-1}}\int_0^{+\infty}\left(\mathcal{F}[S](k_p\hx)\cdot\hx \right)e^{ik_p\hx\cdot z}\frac{k_p^{n}}{\sqrt{\lambda+2\mu}}d\omega ds_{\hx}\\
         &=\frac{i}{(2\pi)^{n}}\int_{\mathbb S^{n-1}}\int_0^{+\infty}\left(\mathcal{F}[S](k_p\hx)\cdot(k_p\hx)\right)e^{ik_p\hx\cdot z}k_p^{n-1}dk_p ds_{\hx}\\
         &=\frac{i}{(2\pi)^{n}}\int_{\mathbb R^{n}}\mathcal{F}[S](\xi)\cdot\xi e^{i\xi\cdot z}d\xi\\
         &=\frac{1}{(2\pi)^{n}}\int_{\mathbb R^{n}}\mathcal{F}[{\rm div}S](\xi) e^{i\xi\cdot z}d\xi\\
         &={\rm div}S(z),\quad z\in\R^n,
    \end{align*}
    \begin{align*}
      \mathcal{I}_s(z)
         &=\frac{i}{(2\pi)^2}\int_{\mathbb S^1}\int_0^{+\infty}\mathcal{F}[S](k_s\hx)\cdot x^{\perp} e^{ik_s\hx\cdot z}\frac{k_s^{2}}{\sqrt{\mu}}d\omega ds_{\hx}\\
        &=\frac{i}{(2\pi)^2}\int_{\mathbb S^{1}}\int_0^{+\infty}\mathcal{F}[S](k_s\hx)\cdot(k_sx^{\perp})e^{ik_p\hx\cdot z}k_sdk_s ds_{\hx}\\
         &=\frac{i}{(2\pi)^2}\int_{\mathbb R^{2}}\mathcal{F}[S](\xi)\cdot\xi^{\perp} e^{i\xi\cdot z}d\xi\\
         &=\frac{1}{(2\pi)^2}\int_{\mathbb R^{2}}\mathcal{F}[{\rm div}^{\perp}S](\xi) e^{i\xi\cdot z}d\xi\\
         &={\rm div}^{\perp} S(z),\quad z\in\R^2
    \end{align*}
and
  \begin{align*}
         \mathcal{I}_s(z)
         &=\frac{i}{(2\pi)^{3}}\int_{\mathbb R^{3}}\left(\mathcal{F}[S](\xi)\times\xi\right) e^{i\xi\cdot z}d\xi\\
         &=\frac{-1}{(2\pi)^{3}}\int_{\mathbb R^{3}}\mathcal{F}[{\rm curl}S](\xi)e^{i\xi\cdot z}d\xi\\
         &=- {\rm curl}\ S(z),\quad z\in\R^3,
    \end{align*}
where we have used
\begin{align*}
      i\mathcal{F}[S](\xi)\cdot\xi=\mathcal{F}[{\rm div}S](\xi),\quad  i\mathcal{F}[S](\xi)\cdot\xi^\perp=\mathcal{F}[{\rm div^\perp}S](\xi) \quad{\rm  and}\quad \mathcal{F}[S](\xi)\times\xi=i\mathcal{F}[{\rm curl}S](\xi)
\end{align*}
respectively.
\end{proof}
Before ending this section, we want to remark that both $\mathcal{I}_p$ and $\mathcal{I}_s$
vanish outside the support of the source $S$, which implies that if the source function $S$ is neither a gradient field nor a curl field, both $\mathcal{I}_p$ and $\mathcal{I}_s$ can be used to give an initial reconstruction of the source support $\Om$.


\section{A quantitative sampling method for electromagnetic  sources}
\subsection{Direct electromagnetic source scattering problems}
We consider an external source $J\in\left( L^2(\mathbb R^3)\right)^3$ with compact support $\Omega$.
For simplicity, we assume that the background medium is homogeneous and dielectric. In other words, electric conductivity $\sigma=0$, while both electric permittivity $\varepsilon$ and magnetic permeability $\mu$ are positive constants. Let $k:=\omega\sqrt{\mu\varepsilon}>0$ be wavenumnber with frequency $\om$. Then the external source $J$ gives the propagation of time-harmonic electromagnetic waves governed by the Maxwell equations
\be\label{ME}
\operatorname{curl} E-i \omega \mu H & =0\quad\mbox{and}\quad
\operatorname{curl} H+i \omega \varepsilon E & =J\quad {\rm in}\, \mathbb R^3,
\en
where $E$ and $H$ denote the  electric field and the magnetic field, respectively.
To characterize the outgoing waves, the scattered fields have to satisfy the Silver-Muller radiation condition
\begin{align}\label{SM-rc}
 \lim\limits _{|x| \rightarrow \infty}\left[\sqrt{\mu} H(x) \times x-|x| \sqrt{\varepsilon} E(x)\right]=0,
\end{align}
which holds uniformly w.r.t. $\hx:=x/|x|$.
Recall the dyadic Green's function 
\ben
    \mathbb{G}_k(x, y):=\Phi_k(x, y) \mathbb{I}+\frac{1}{k^2} \nabla_y^2 \Phi_k(x, y), \quad x \neq y,
\enn
for Maxwell's equations where $\mathbb{I}$ is a $3 \times 3$ identity matrix and $\nabla_y^2 \Phi_k(x, y)$ is the Hessian matrix for $\Phi_k$ with respect to $y$ and
$\Phi_k(x,y):=\frac{e^{ik|x-y|}}{4\pi|x-y|},\ x\neq y$.
Then, the unique solution $(E,H)$ of the problem \eqref{ME}-\eqref{SM-rc} has the representation
\ben
    E(x, k)=i\omega\mu \int_{\mathbb{R}^3} \mathbb{G}_k(x, y) J(y) d y, \quad 
    H(x,k)={\rm curl}\int_{\mathbb{R}^3} \mathbb{G}_k(x, y) J(y) d y,\quad x \in \mathbb{R}^3,\,k\in\R^{+} .
\enn
The scattered fields $E$ and $H$ have the following asymptotic form
\begin{align*}
 \begin{array}{ll}
E(x, k)=\frac{e^{i k|x|}}{|x|}\left\{E_{\infty}(\hat{x}, k)+O\left(\frac{1}{|x|}\right)\right\}, & |x| \rightarrow \infty, \\
H(x, k)=\frac{e^{i k|x|}}{|x|}\left\{H_{\infty}(\hat{x}, k)+O\left(\frac{1}{|x|}\right)\right\}, & |x| \rightarrow \infty,
\end{array}   
\end{align*}
uniformly in all directions $\hat{x}=x /|x|$ where the vector fields $E_{\infty}$ and $H_{\infty}$ defined on the unit sphere $\mathbb{S}^2$ are known as the electric far field pattern and magnetic far field pattern, respectively. 
Precisely, 
\begin{align}
\label{ME-exp}
    &E_{\infty}(\hx,k)=\frac{ik}{4\pi\sqrt{\varepsilon}} \hx\times\left(\int_{\R^3}e^{-ik\hx\cdot y}J(y)dy\times\hx\right),\quad \hx\in\mathbb S^2,\,k\in\R^{+},\\
    &H_{\infty}(\hx,k)= \sqrt{\frac{\varepsilon}{\mu}}\hx \times E_{\infty}(\hx),\quad \hx\in\mathbb S^2,\,k\in\R^{+}.    
\end{align}
Note that $E_{\infty}$ and $H_{\infty}$ are tangential vector fields and orthogonal to each other, so it is sufficient to work only with one of the far field patterns.

\subsection{Indicators for electromagnetic sources}
The electric far field patterns $E_{\infty}$ define a vector indicator function  \begin{align}
    \label{indicator-e}
   \mathcal{I}_E(z):=\frac{-i\sqrt{\varepsilon}}{2\pi^2}\int_{\mathbb S^2}\int_0^{+\infty}E_{\infty}(\hx,k)e^{ik\hx\cdot z}kdkds_{\hx},\ z\in\mathbb R^3.
\end{align}
The following theorem implies that, if the source function is divergence free, such an indicator function is nothing else but the external source function. 
\begin{theorem}
\label{thm-e}
    Let $J\in\left(L^2(\mathbb R^3)\right)^3$ be divergence free, i.e., ${\rm div}J=0$, we have
    \be
    \mathcal{I}_E=J.
    \en
\end{theorem}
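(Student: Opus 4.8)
The plan is to mimic the proof of Theorem \ref{thm-f}, reducing the indicator $\mathcal{I}_E$ to an inverse Fourier transform via the explicit expression \eqref{ME-exp} for the electric far field pattern. First I would substitute $k = \omega\sqrt{\mu\varepsilon}$ and rewrite \eqref{ME-exp} as
\be
E_{\infty}(\hx,k) = \frac{ik}{4\pi\sqrt{\varepsilon}}\,\hx\times\left(\mathcal{F}[J](k\hx)\times\hx\right), \quad \hx\in\mathbb S^2,\ k\in\R^{+},
\en
so that inserting into \eqref{indicator-e} gives
\ben
\mathcal{I}_E(z) = \frac{1}{8\pi^3}\int_{\mathbb S^2}\int_0^{+\infty}\hx\times\left(\mathcal{F}[J](k\hx)\times\hx\right)e^{ik\hx\cdot z}\,k^2\,dk\,ds_{\hx},
\enn
where the constant $\frac{-i\sqrt{\varepsilon}}{2\pi^2}\cdot\frac{ik}{4\pi\sqrt{\varepsilon}} = \frac{k}{8\pi^3}$ accounts for the extra power of $k$. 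The change of variables $\xi = k\hx$ then converts $\int_{\mathbb S^2}\int_0^{+\infty}(\cdot)\,k^2\,dk\,ds_{\hx}$ into $\int_{\R^3}(\cdot)\,d\xi$, yielding
\ben
\mathcal{I}_E(z) = \frac{1}{(2\pi)^3}\int_{\R^3}\hat\xi\times\left(\mathcal{F}[J](\xi)\times\hat\xi\right)e^{i\xi\cdot z}\,d\xi.
\enn

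The key step is the identity $\hat\xi\times(\mathcal{F}[J](\xi)\times\hat\xi) = \mathcal{F}[J](\xi) - (\mathcal{F}[J](\xi)\cdot\hat\xi)\hat\xi$, which is just the decomposition of a vector into components tangential and normal to $\hat\xi$. The normal part is $(\mathcal{F}[J](\xi)\cdot\xi)\,\xi/|\xi|^2$, and since $i\,\mathcal{F}[J](\xi)\cdot\xi = \mathcal{F}[{\rm div}\,J](\xi)$ (the same Fourier identity used in Theorem \ref{thm-ps}), the hypothesis ${\rm div}\,J = 0$ forces $\mathcal{F}[J](\xi)\cdot\xi = 0$ for a.e.\ $\xi$. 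Hence the integrand reduces to $\mathcal{F}[J](\xi)e^{i\xi\cdot z}$, and
\ben
\mathcal{I}_E(z) = \frac{1}{(2\pi)^3}\int_{\R^3}\mathcal{F}[J](\xi)e^{i\xi\cdot z}\,d\xi = \mathcal{F}^{-1}[\mathcal{F}[J]](z) = J(z),
\enn
which is the claim.

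I expect the main obstacle to be bookkeeping rather than anything deep: tracking the constants through the substitution $k=\omega\sqrt{\mu\varepsilon}$ (so that $dk = \sqrt{\mu\varepsilon}\,d\omega$ and the weight $k\,dk$ in \eqref{indicator-e} combines with the prefactor of $E_\infty$ to produce exactly $k^2\,dk$), and making sure the vector triple-product expansion is applied with the correct sign. A secondary point worth a sentence is justifying the interchange of the $ds_{\hx}$, $dk$ integrations and the reinterpretation as an $\R^3$ integral: since $J\in (L^2(\R^3))^3$ has compact support, $\mathcal{F}[J]$ is smooth and rapidly decaying, so Fubini applies and the resulting integral converges. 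One should also note that the divergence-free condition is used only to kill the radial component, mirroring the earlier remark that without it the indicator would recover the transverse (divergence-free) part of $J$; this makes the role of the hypothesis transparent.
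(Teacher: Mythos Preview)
Your proposal is correct and follows essentially the same route as the paper: substitute \eqref{ME-exp} into \eqref{indicator-e}, track the constants so that the prefactor and the weight $k\,dk$ combine to $\frac{1}{(2\pi)^3}k^2\,dk$, expand the vector triple product, kill the radial component via ${\rm div}\,J=0$, and recognize the result as $\mathcal{F}^{-1}\mathcal{F}[J]$. One small correction to your side remark: compact support of $J\in(L^2)^3$ makes $\mathcal{F}[J]$ smooth (indeed entire) but not rapidly decaying, so the inversion should be understood in the $L^2$ sense---which is exactly the level of rigor the paper adopts as well.
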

\begin{proof}
Inserting \eqref{ME-exp} into \eqref{indicator-e}, we have
\begin{align*}
\mathcal{I}_E(z)
&=\frac{1}{(2\pi)^3}\int_{\mathbb S^2}\int_0^{+\infty}\hx\times\left(\int_{\R^3}e^{-ik\hx\cdot y}J(y)dy\times\hx\right)e^{ik\hx\cdot z}k^2dkds_{\hx}\\
&=\frac{1}{(2\pi)^{3}}\int_{\mathbb S^2}\int_0^{+\infty}\hx\times\left(\mathcal{F}[J](k\hx)\times\hx\right)e^{ik\hx\cdot z}k^2dkds_{\hx}\\
&=\frac{1}{(2\pi)^{3}}\int_{\mathbb S^2}\int_0^{+\infty}\left\{\mathcal{F}[J](k\hx)-\hx[\mathcal{F}[J](k\hx)\cdot\hx]\right\}e^{ik\hx\cdot z}k^2dkds_{\hx}
,\ z\in\mathbb R^3.
\end{align*}
By the assumption ${\rm div}J=0$, we have
\ben
\mathcal{F}[J](\xi)\cdot\xi=-i\mathcal{F}[{\rm div}J](\xi)=0.
\enn
Therefore, 
\begin{align*}
\mathcal{I}_E(z)
&=\frac{1}{(2\pi)^{3}}\int_{\mathbb S^2}\int_0^{+\infty}\mathcal{F}[J](k\hx)e^{ik\hx\cdot z}k^2dkds_{\hx}\\
&=\mathcal{F}^{-1}\mathcal{F}[J]\\
&=J(z),\ z\in\mathbb R^3.
\end{align*}
 The proof is complete.
\end{proof}

Note that, different from the acoustic and elastic source scattering problems, there exists non-radiating sources for the electromagnetic waves. Precisely, the electromagnetic far field patterns may vanish for the sources $J$ satisfying ${\rm div}J\neq0$. We refer to \cite{stability-elastic-electromagnetic,non-sou-EM} for more details on the non-radiating electromagnetic sources. Physically, $\rho:=\frac{1}{i\om}{\rm div}J$ is the charge density. We define
\be
    \label{indicator-h}
     &&\mathcal{I}_H(z):=\frac{\sqrt{\mu}}{2\pi^2}\int_{\mathbb S^2}\int_0^{+\infty}H_{\infty}(\hx,k)e^{ik\hx\cdot z}k^2dkds_{\hx},\ z\in\mathbb R^3,\\
    \label{indicator-rho}
    &&\mathcal{I}_{\rho}(z):=\frac{1}{(2\pi)^3}\int_{\mathbb S^2}\int_0^{+\infty}\left(\omega\mathcal{F}[\rho](k\hx)-4\pi i\sqrt{\varepsilon}E_{\infty}(\hx,k)\right)e^{ik\hx\cdot z}kdkds_{\hx},\ z\in\mathbb R^3.
\en
Following the arguments in the proof of Theorem \ref{thm-e}, we obtain the following theorem. To avoid repetition, we omit the proof.
\begin{theorem}
For $J\in\left(H^1(\mathbb R^3)\right)^3$, we have
\ben
\mathcal{I}_H={\rm \curl}J.
\enn
If we know the charge density $\rho\in L^2(\mathbb R^3)$ in advance, we have
\ben
\mathcal{I}_{\rho}=J.
\enn
\end{theorem}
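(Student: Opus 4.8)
The plan is to mimic verbatim the computation used in the proof of Theorem~\ref{thm-e}, tracking the contribution of the non-divergence-free part of $J$ that was discarded there. First I would substitute the representation \eqref{ME-exp} for $H_\infty$ via the relation $H_\infty(\hx,k)=\sqrt{\varepsilon/\mu}\,\hx\times E_\infty(\hx,k)$, so that the integrand of \eqref{indicator-h} becomes, up to constants, $\hx\times\bigl(\hx\times(\mathcal{F}[J](k\hx)\times\hx)\bigr)$. Using the standard triple-product identities and the fact that $\mathcal{F}[J](k\hx)\times\hx$ is already tangential, this collapses to $-\mathcal{F}[J](k\hx)\times\hx$ (the projection onto the tangent plane at $\hx$ leaves a tangential vector unchanged). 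Then the same change of variables $k\hx\mapsto\xi$ that converts $\int_{\mathbb S^2}\int_0^\infty (\cdot)\,k^2\,dk\,ds_{\hx}$ into $\int_{\R^3}(\cdot)\,d\xi$ turns the indicator into $\tfrac{1}{(2\pi)^3}\int_{\R^3}\bigl(\xi\times\mathcal{F}[J](\xi)\bigr)e^{i\xi\cdot z}\,d\xi$. Invoking the identity $\mathcal{F}[J](\xi)\times\xi=i\mathcal{F}[\curl J](\xi)$ (equivalently $\xi\times\mathcal{F}[J](\xi)=-i\mathcal{F}[\curl J](\xi)$, which is exactly the analogue of the three identities collected at the end of the proof of Theorem~\ref{thm-ps}) reduces the expression to $\mathcal{F}^{-1}\mathcal{F}[\curl J]=\curl J$. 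The hypothesis $J\in(H^1(\R^3))^3$ is what makes $\curl J\in(L^2(\R^3))^3$ and legitimizes the Fourier-inversion step, just as $H^1$-regularity was needed in Theorem~\ref{thm-ps}.

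For the second assertion, I would start from \eqref{indicator-rho} and split the integrand into the two pieces $\omega\,\mathcal{F}[\rho](k\hx)$ and $-4\pi i\sqrt{\varepsilon}\,E_\infty(\hx,k)$. For the second piece, the computation in the proof of Theorem~\ref{thm-e} (before the divergence-free assumption is used) already shows that
\[
\frac{-i\sqrt{\varepsilon}}{2\pi^2}\int_{\mathbb S^2}\int_0^{+\infty}E_\infty(\hx,k)e^{ik\hx\cdot z}k\,dk\,ds_{\hx}
=\frac{1}{(2\pi)^3}\int_{\R^3}\Bigl\{\mathcal{F}[J](\xi)-\frac{\xi(\mathcal{F}[J](\xi)\cdot\xi)}{|\xi|^2}\Bigr\}e^{i\xi\cdot z}\,d\xi,
\]
so this term contributes $J(z)$ minus the inverse Fourier transform of the longitudinal part $\xi(\mathcal{F}[J](\xi)\cdot\xi)/|\xi|^2$. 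I would then show that the $\omega\,\mathcal{F}[\rho]$ term exactly cancels that longitudinal correction: since $\rho=\tfrac{1}{i\omega}\ddiv J$ gives $\omega\,\mathcal{F}[\rho](\xi)=\tfrac{1}{i}\mathcal{F}[\ddiv J](\xi)=\mathcal{F}[J](\xi)\cdot\xi$, and since under the change of variables $\xi=k\hx$ one has $k=|\xi|$ so that the weight $k\,dk\,ds_{\hx}=|\xi|^{-2}\,d\xi$ produces precisely the factor $\xi/|\xi|^2$ when paired against $\hx=\xi/|\xi|$ — here one must be careful that the $\hx$ appearing in front of $\omega\mathcal{F}[\rho](k\hx)$ is implicit in how that scalar multiplies the vanishing-order terms; in fact $\omega\mathcal{F}[\rho](k\hx)$ as written is scalar, so it should be read as multiplying $\hx$, matching the $\hx[\mathcal{F}[J](k\hx)\cdot\hx]$ term. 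Carrying this out yields $\mathcal{I}_\rho=\mathcal{F}^{-1}\mathcal{F}[J]=J$.

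The main obstacle, and the only genuinely delicate point, is the bookkeeping of the scalar-versus-vector structure in \eqref{indicator-rho} together with the Jacobian of the polar change of variables: one must verify that the factor $k$ (rather than $k^2$) in the measure, combined with $\hx=\xi/|\xi|$, reproduces exactly $\xi/|\xi|^2$ so that $\omega\mathcal{F}[\rho]$ cancels the term $\hx[\mathcal{F}[J](k\hx)\cdot\hx]$ pointwise in $\xi$. Once that algebra is checked, both identities follow from the already-established $\mathcal{F}$-identities and Fourier inversion exactly as in Theorems~\ref{thm-f}--\ref{thm-e}; there is no analytic subtlety beyond the $H^1$ (resp.\ $L^2$) regularity that guarantees the transformed integrands lie in $L^1\cap L^2$ and the inversion is valid. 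I would also remark, paralleling the closing remark of Section~2, that $\mathcal{I}_H=\curl J$ vanishes outside $\Om$ and hence can serve to locate the source support when $J$ is not curl-free.
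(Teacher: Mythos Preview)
Your proposal is correct and follows exactly the approach the paper indicates (the paper omits the proof, saying only that it proceeds as in Theorem~\ref{thm-e}). The one slip is the Jacobian: in $\R^3$ one has $k\,dk\,ds_{\hx}=|\xi|^{-1}\,d\xi$, not $|\xi|^{-2}\,d\xi$, but your stated target factor $\xi/|\xi|^2$ is nonetheless correct (the missing $|\xi|^{-1}$ is supplied by $\hx=\xi/|\xi|$), and the cancellation of the longitudinal part $\hx[\mathcal{F}[J](k\hx)\cdot\hx]$ goes through exactly as you describe.
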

We finally remark that, without any a priori information on the charge density,  we can always reconstruct ${\rm \curl}J$ from the magnetic far field patterns.

\section{Stability estimates for the discrete indicators}
Definitely, the theorems in the previous two sections show that the indicators $\mathcal{I}_f,\mathcal{I}_p,\mathcal{I}_s,\mathcal{I}_E$ and $\mathcal{I}_H$ can be used to determine the full or partial information of unknown sources.
In practice, the far field patterns are taken for finitely many observation directions and frequencies. Therefore, we have to consider the indicators in the form of a finite sum. In this section, we derive the stability analyses for such indicators.

We begin with the elastic source reconstructions in $\R^2$. 
The observation direction set is defined by
\ben
\Theta_L:=\left\{\Big(\cos\frac{2l \pi}{L}, \sin\frac{2l \pi}{L}\Big)^T\, \Big{|}\, l=0, 1, \cdots, L-1\right\}.
\enn\\
We take the circular frequencies
\ben
\omega_m=m\Delta \omega,\quad m=1,2,\cdots, \Lambda.
\enn
Given a source function $S$, we compute  synthetic approximation $u^{\infty}(\hx_l,\omega_m)$ from \eqref{exsin2D} by the trapezoidal rule. We
further perturb this data by random noise as follow
 \begin{align}
 \label{dataerror}
     u_t^{\infty,\delta}(\hx_l,\omega_m)=u_t^{\infty}(\hx_l,\omega_m)\Big(1+\delta\big[N(0,1)+N(0,1)i\big]\Big),\quad t=p,s,
 \end{align}
 where $N(0,1)$ is a normal distribution with mean zero and standard derivation one.  We define $k_{p,m}:=\frac{\omega_m}{\sqrt{\lambda+2\mu}}$ and $k_{s,m}:=\frac{\omega_m}{\sqrt{\mu}}$, then we introduce an indicator function $I_f$ as follow,
\begin{align}\label{IF}
    I_f(z):=\frac{\Delta\omega}{2\pi L}\sum\limits_{\hx\in\Theta_L}\sum^{\Lambda}_{m=1}\omega_m\left[u^{\infty,\delta}_p(\hat{x},\omega_m)\frac{e^{ik_{p,m}\hx\cdot z}}{\lambda+2\mu}
    +u_s^{\infty,\delta}(\hat{x},\omega_m)\frac{e^{ik_{s,m}\hx\cdot z}}{\mu}
    \right],\quad z\in\mathcal{G}.
\end{align}
Here, $\mathcal{G}$ is a bounded domain containing the source support $\Om$. It is easy to verify that $I_f$ is an approximation of $\mathcal{I}_f$ given by \eqref{indicator-f} using the right rectangle rule.. Therefore, $I_f$ is except to reconstruct the source.
Furthermore, if only partial far field patterns data are available, we design indicator functions $I_p$ and $I_s$ as follow
\begin{align}\label{IP}
    I_p(z):=\frac{i\Delta\omega}{2\pi L}\sum\limits_{\hx\in\Theta_L}\sum^{\Lambda}_{m=1}\omega_m^2 \left[\left(u^{\infty,\delta}_p(\hat{x},\omega_m)\cdot\hx\right)\frac{e^{ik_{p,m}\hx\cdot z}}{\left(\sqrt{\lambda+2\mu}\right)^3}
    \right],\quad z\in\mathcal{G}.
\end{align}
and
\begin{align}\label{IS}
    I_s(z):=\frac{i\Delta\omega}{2\pi L}\sum\limits_{\hx\in\Theta_L}\sum^{\Lambda}_{m=1}\omega_m^2\left[ \left(u^{\infty,\delta}_s(\hat{x},\omega_m)\cdot x^{\perp}\right)\frac{e^{ik_{s,m}\hx\cdot z}}{\left(\sqrt{\mu}\right)^3}
    \right],\quad z\in\mathcal{G}.
\end{align}

The following error estimates give the effectiveness of the above indicators.
\begin{theorem}\label{stab-2-ela}
    For $S\in\left(H^{2}(\mathbb R^2)\right)^2$, we have 
    \begin{align}
        \label{If-est-2}
        &| I_f(z)-S(z)|\leq C\left[\omega_{\Lambda}(\delta +\Delta\omega+L^{-1})+\omega^{-1}_{\Lambda}\right]\Vert S\Vert_{H^{2}},\quad z\in\mathcal{G},
    \end{align}
    For ${\rm div}S\in H^{2}(\mathbb R^2)$, we have
    \begin{align}
        \label{Ip-est-2}
        &|I_p(z)-{\rm div}S(z)|\leq C\left[\omega_{\Lambda}(\delta+\Delta\omega+L^{-1})+\omega_{\Lambda}^{-1}\right]\Vert{\rm div}S\Vert_{H^2},\ z\in\mathcal{G}.
    \end{align}
    For ${\rm div^{\perp}}S\in H^{2}(\mathbb R^2)$, we have
    \begin{align}
       \label{Is-est-2}
       &|I_s(z)-{\rm div}^{\perp}S(z)|\leq C\left[\omega_{\Lambda}(\delta+\Delta\omega+L^{-1})+\omega_{\Lambda}^{-1}\right]\Vert{\rm div}^{\perp}S\Vert_{H^2},\ z\in\mathcal{G}.
    \end{align}
    Here, the constant $C$ depends only on $\Omega, \mathcal{G}$ and the Lam\'e constants $\lambda$, $\mu$.
\end{theorem}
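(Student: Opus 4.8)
The plan is to estimate the error $I_f(z)-S(z)$ by inserting the intermediate quantity $\mathcal{I}_f(z)$, which equals $S(z)$ by Theorem~\ref{thm-f}, and splitting
\[
I_f(z)-S(z)=\big(I_f(z)-\mathcal{I}_f(z)\big)=E_{\mathrm{noise}}+E_{\mathrm{quad}}+E_{\mathrm{tail}},
\]
where $E_{\mathrm{noise}}$ collects the contribution of the multiplicative noise in \eqref{dataerror}, $E_{\mathrm{quad}}$ is the error of replacing the continuous integral in \eqref{indicator-f} over $\mathbb{S}^1\times(0,\omega_\Lambda)$ by the right-rectangle/trapezoidal sum in \eqref{IF}, and $E_{\mathrm{tail}}$ is the truncation error from discarding the frequency range $(\omega_\Lambda,+\infty)$. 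First I would record, using \eqref{exsin2D} and the identity $u_p^\infty(\hx,\omega)+u_s^\infty(\hx,\tfrac{k_p}{k_s}\omega)=\mathcal{F}[S](k_p\hx)$ from the proof of Theorem~\ref{thm-f}, that on the support-containing grid every far field value is bounded pointwise by $\|S\|_{L^1(\Omega)}\le C\|S\|_{L^2}$, and more importantly that $\omega\mapsto\mathcal{F}[S](k_p\hx)$ and its derivatives in $(\hx,\omega)$ inherit decay from the $H^2$-regularity of $S$: $|\mathcal{F}[S](\xi)|\le C\|S\|_{H^2}(1+|\xi|)^{-2}$, and the mixed first derivatives of the integrand $\mathcal{F}[S](k_p\hx)e^{ik_p\hx\cdot z}\omega/(\lambda+2\mu)$ are bounded (on the compact set $z\in\mathcal{G}$) by $C\|S\|_{H^2}(1+\omega)^{-1}$ — these are the two analytic facts that drive everything.

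For the tail term, $|E_{\mathrm{tail}}|\le C\int_{\omega_\Lambda}^\infty (1+\omega)^{-2}\,\omega\,d\omega\cdot\|S\|_{H^2}\le C\omega_\Lambda^{-1}\|S\|_{H^2}$, giving the $\omega_\Lambda^{-1}$ contribution. For the quadrature term I would apply the standard one-dimensional error bound for the right-rectangle rule in $\omega$ with step $\Delta\omega$ and for the trapezoidal/midpoint rule on the circle $\mathbb{S}^1$ with $L$ equispaced nodes; each contributes the derivative of the integrand times the step size, and since the integrand is supported (effectively) in $\omega\lesssim\omega_\Lambda$ with derivative bound $C\omega_\Lambda$ there, one gets $|E_{\mathrm{quad}}|\le C\omega_\Lambda(\Delta\omega+L^{-1})\|S\|_{H^2}$. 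The factor $\omega_\Lambda$ here comes from differentiating $e^{ik\hx\cdot z}$, which produces a factor $k\sim\omega$, and from the weight $\omega_m$ in \eqref{IF}. For the noise term, $E_{\mathrm{noise}}=\frac{\Delta\omega}{2\pi L}\sum_{\hx,m}\omega_m u_t^{\infty}(\hx,\omega_m)\cdot\delta[N(0,1)+iN(0,1)]\cdot e^{ik_{t,m}\hx\cdot z}/(\text{const})$; bounding $|u_t^\infty|\le C\|S\|_{H^2}$, $|\omega_m|\le\omega_\Lambda$, and $\frac{\Delta\omega}{2\pi L}\sum_{\hx,m}1=\frac{\Delta\omega\Lambda}{2\pi}\approx\omega_\Lambda/(2\pi)$ with the Gaussian factors treated by their (almost-sure or expected) boundedness, yields $|E_{\mathrm{noise}}|\le C\omega_\Lambda\,\delta\,\|S\|_{H^2}$ — here the hidden constant absorbs the realization of the noise, as is standard in this literature. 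Summing the three bounds gives \eqref{If-est-2}.

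The estimates \eqref{Ip-est-2} and \eqref{Is-est-2} follow the same three-way split, now using Theorem~\ref{thm-ps} in place of Theorem~\ref{thm-f}: $\mathcal{I}_p=\operatorname{div}S$ and $\mathcal{I}_s=\operatorname{div}^\perp S$. The only change is that the relevant far field combination is $u_p^\infty(\hx,\omega)\cdot\hx=\mathcal{F}[S](k_p\hx)\cdot\hx=-\tfrac{i}{k_p}\mathcal{F}[\operatorname{div}S](k_p\hx)$ (respectively with $\operatorname{div}^\perp$), so after the change of variables $\omega\mapsto k_p$ (or $k_s$) the integrand is $\mathcal{F}[\operatorname{div}S](\xi)e^{i\xi\cdot z}$, and $H^2$-regularity of $\operatorname{div}S$ (resp. $\operatorname{div}^\perp S$) supplies exactly the decay $|\mathcal{F}[\operatorname{div}S](\xi)|\le C\|\operatorname{div}S\|_{H^2}(1+|\xi|)^{-2}$ and the derivative bounds needed to rerun the tail/quadrature/noise estimates verbatim; note the extra power $\omega_m^2$ (versus $\omega_m$) in \eqref{IP}–\eqref{IS} is compensated by the $1/k_p$ or $1/k_s$ from the identity, so the net frequency weight is again a single power of $\omega$, and the final bounds have the same form with $\|\operatorname{div}S\|_{H^2}$ (resp. $\|\operatorname{div}^\perp S\|_{H^2}$) in place of $\|S\|_{H^2}$.

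The main obstacle I anticipate is making the quadrature estimate on $\mathbb{S}^1$ genuinely rigorous and tracking the correct power of $\omega_\Lambda$: the integrand $\hx\mapsto \mathcal{F}[S](k\hx)e^{ik\hx\cdot z}$ is smooth but its derivative picks up a factor of $k$, so one must be careful that the circle-quadrature error is $O(L^{-1})$ times a constant that grows linearly (not faster) in $\omega_\Lambda$ — this is why the hypothesis is $H^2$ rather than merely $L^2$, and why only one power of $\omega_\Lambda$ (not $\omega_\Lambda^2$) appears multiplying $L^{-1}$. A secondary but routine subtlety is the interplay between the weight $\omega^{n-1}=\omega$ in the continuous indicator and the Jacobian of the substitution $d\omega=\sqrt{\lambda+2\mu}\,dk_p$: one has to confirm that the discrete weights in \eqref{IF}–\eqref{IS} are precisely the ones produced by applying the right-rectangle rule to \eqref{indicator-f}–\eqref{indicator-s} after this substitution, which is what the remark following \eqref{IF} asserts and which I would verify explicitly at the outset so that no spurious constants enter. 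Everything else is a bookkeeping exercise in the triangle inequality and standard numerical-quadrature remainder formulas.
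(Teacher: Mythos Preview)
Your three-way split into tail, quadrature, and noise is exactly the paper's strategy (the paper first separates the $p$- and $s$-contributions in $I_f$ via the projections $\hat\xi(\mathcal F[S]\cdot\hat\xi)$ and $\hat\xi^\perp(\mathcal F[S]\cdot\hat\xi^\perp)$, since in \eqref{IF} the two terms carry different phases $e^{ik_{p,m}\hat x\cdot z}$ and $e^{ik_{s,m}\hat x\cdot z}$, and then bounds six terms $E_1$--$E_6$ that are precisely your three errors, doubled). Your reduction of \eqref{Ip-est-2}--\eqref{Is-est-2} to the same argument with $\operatorname{div}S$, $\operatorname{div}^\perp S$ in place of $S$ is also what the paper does. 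But two of your three individual bounds do not actually yield the stated powers of $\omega_\Lambda$.

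For the tail, the integral you wrote down, $\int_{\omega_\Lambda}^\infty(1+\omega)^{-2}\omega\,d\omega$, diverges logarithmically; the pointwise decay $|\mathcal F[S](\xi)|\le C\|S\|_{H^2}(1+|\xi|)^{-2}$ (which is correct, using compact support) is not enough in $\R^2$ once the Jacobian $|\xi|$ is included. The paper instead applies H\"older to the two-dimensional tail,
\[
\int_{|\xi|\ge k_\Lambda}|\mathcal F[S](\xi)|\,d\xi\le\Bigl(\int_{|\xi|\ge k_\Lambda}(1+|\xi|^2)^{-2}\,d\xi\Bigr)^{1/2}\|S\|_{H^2},
\]
and the first factor is $O(k_\Lambda^{-1})$. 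For the noise, your bookkeeping pulls out $|\omega_m|\le\omega_\Lambda$ and then evaluates $\tfrac{\Delta\omega}{2\pi L}\sum_{\hat x,m}1\approx\omega_\Lambda/(2\pi)$; these multiply to $C\omega_\Lambda^2\delta$, not $C\omega_\Lambda\delta$. The paper saves one power by bounding the \emph{product} $|\xi_{m,l}|\,|\mathcal F[S](\xi_{m,l})|\le\||\xi|\mathcal F[S]\|_{L^\infty}\le C(\Omega)\|S\|_{H^1}$ uniformly in $m,l$, so that only the discrete volume $\sum\tfrac{2\pi\Delta\omega}{\sqrt\mu L}=C\omega_\Lambda$ contributes growth. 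The same mechanism drives the quadrature bound: the paper shows the polar derivatives of $\rho\,\mathcal F[S_z](\rho\hat x)$ are bounded \emph{uniformly} by $C\|S\|_{H^2}$ --- the factors of $k$ from differentiating $e^{ik\hat x\cdot z}$ are absorbed by the compact-support Fourier decay --- and the $\omega_\Lambda$ in $\omega_\Lambda(\Delta\omega+L^{-1})$ comes from summing over the $L\Lambda$ cells, not from the derivative size as you state.
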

\begin{proof}
We only show \eqref{If-est-2} in details. The other two estimates \eqref{Ip-est-2}-\eqref{Is-est-2} can be proved similarly.

Straightforward calculations show that
\begin{align}\label{If-decomposition}
    4\pi^2I_f(z)=I_{f,p}(z)+I_{f,s}(z),\quad z\in\mathcal{G},
\end{align}
where
\begin{align*}
    I_{f,p}(z):=\sum_{\substack{|\xi_{m,l,p}|\leq k_{p,\Lambda}}}\left\{u_p^{\infty,\delta}(\hat{\xi}_{m,l,p},\sqrt{\lambda+2\mu}|\xi_{m,l,p}|) e^{i\xi_{m,l,p}\cdot z}|\xi_{m,l,p}|\frac{2\pi\Delta\omega}{\sqrt{\lambda+2\mu}L}\right\},\quad z\in\mathcal{G}
\end{align*}
and
\begin{align*}
    I_{f,s}(z):=\sum_{\substack{|\xi_{m,l,s}|\leq k_{s,\Lambda}}}\left\{u_s^{\infty,\delta}(\hat{\xi}_{m,l,s},\sqrt{\mu}|\xi_{m,l,s}|)e^{i\xi_{m,l,s}\cdot z}|\xi_{m,l,s}|\frac{2\pi\Delta\omega}{\sqrt{\mu}L}\right\},\quad z\in\mathcal{G}
\end{align*}
with
\begin{align*}
    \xi_{m,l,p}:=k_{p,m}e^{2il\pi/L}\quad\mbox{and}\quad \xi_{m,l,s}:=k_{s,m}e^{2il\pi/L},\quad m\in \{1,2,\cdots,\Lambda\},\, l\in\{0, 1,\cdots,L-1\}.
\end{align*}
Based on the Fourier transform and the inverse Fourier transform, the source $S$ can be represented as following form
\begin{align}\label{S-decomposition}
    4\pi^2S(z)
    =&\int_{\mathbb R^2}\mathcal{F}[S](\xi)e^{i\xi\cdot z}d\xi\cr
    =&\int_{\mathbb R^2}\mathbb{F}[S](\xi)e^{i\xi\cdot z}d\xi+\int_{\mathbb R^2}\mathbb{F}^{\perp}[S](\xi)e^{i\xi\cdot z}d\xi\cr
    =&\int_{|\xi|\leq k_{p,\Lambda}}\mathbb{F}[S](\xi)e^{i\xi\cdot z}d\xi
    +\int_{|\xi|\geq k_{p,\Lambda}}\mathbb{F}[S](\xi)e^{i\xi\cdot z}d\xi\cr
    &+\int_{|\xi|\leq k_{s,\Lambda}}\mathbb{F}^{\perp}[S](\xi))e^{i\xi\cdot z}d\xi
    +\int_{|\xi|\geq k_{s,\Lambda}}\mathbb{F}^{\perp}[S](\xi)e^{i\xi\cdot z}d\xi,\quad z\in\mathcal{G},
\end{align}
where $\mathbb{F}^{\perp}[S](\xi):=\hat{\xi}^{\perp}\left(\mathcal{F}[S](\xi)\cdot \hat{\xi}^{\perp}\right)$ and $\mathbb{F}[S](\xi):=\hat{\xi}\left(\mathcal{F}[S](\xi)\cdot \hat{\xi}\right)$.
Then, from \eqref{If-decomposition} and \eqref{S-decomposition}, we have 
\begin{equation}\label{allerror}
        \begin{split}
            &4\pi^2|I_f(z)-S(z)|\\
            \leq&\left|\int_{|\xi|\leq k_{s,\Lambda}}\mathbb{F}^{\perp}[S](\xi)e^{i\xi\cdot z}d\xi-\sum_{\substack{|\xi_{m,l,s}|\leq k_{s,\Lambda}}}\left\{\mathbb{F}^{\perp}[S](\xi_{m,l,s})e^{i\xi_{m,l,s}\cdot z}|\xi_{m,l,s}|\frac{2\pi\Delta\omega}{\sqrt{\mu}L}\right\}\right|\\
            &+\left|\int_{|\xi|\leq k_{p,\Lambda}}\mathbb{F}[S](\xi)e^{i\xi\cdot z}d\xi-\sum_{\substack{|\xi_{m,l,p}|\leq k_{p,\Lambda}}}\left\{\mathbb{F}[S](\xi_{m,l,p})e^{i\xi_{m,l,p}\cdot z}|\xi_{m,l,p}|\frac{2\pi\Delta\omega}{\sqrt{\lambda+2\mu}L}\right\}\right|\\
       &+\left|\sum_{\substack{|\xi_{m,l,s}|\leq k_{s,\Lambda}}}\left\{\mathbb{F}^{\perp}[S](\xi_{m,l,s})e^{i\xi_{m,l,s}\cdot z}|\xi_{m,l,s}|\frac{2\pi\Delta\omega}{\sqrt{\mu}L}\right\} -I_{f,s}(z)\right|\\
       &+\left|\sum_{\substack{|\xi_{m,l,p}|\leq k_{p,\Lambda}}}\left\{\mathbb{F}[S](\xi_{m,l,p})e^{i\xi_{m,l,p}\cdot z}|\xi_{m,l,p}|\frac{2\pi\Delta\omega}{\sqrt{\lambda+2\mu}L}\right\}-I_{f,p}(z)\right|\\
       &+\left|\int_{|\xi|\geq k_{s,\Lambda}}\mathbb{F}^{\perp}[S](\xi)e^{i\xi\cdot z}d\xi\right|\\
       &+\left|\int_{|\xi|\geq k_{p,\Lambda}}\mathbb{F}[S](\xi)e^{i\xi\cdot z}d\xi\right|\\
       :=&E_1+E_2+E_3+E_4+E_5+E_6,\quad z\in\mathcal{G}.
        \end{split}
    \end{equation}
    Using polar coordinate transformation and differential mean value theorem, define $S_z(x):=S(x+z)$, we derive that
\be\label{firsterror}
    E_1(z)
    &&\leq\sum_{\substack{|\xi_{m,l,s}|\leq k_{s,\Lambda}}}\int_{F_{m,l}}\left|\frac{\partial(\rho\mathbb{F}^{\perp}[S_z])}{\partial\rho}(\xi_{m,l,s})\left(\rho-k_{s,m}\right)+\frac{\partial(\rho\mathbb{F}^{\perp}[S_z])}{\partial\theta}(\xi_{m,l,s})\left(\theta-\frac{2l\pi}{L}\right)\right| d\rho d\theta\cr
    &&\leq A\sum_{\substack{|\xi_{m,l,s}|\leq k_{s,\Lambda}}}\int_{F_{m,l}}\left|\rho-k_{s,m}\right|+\left|\theta-\frac{2l\pi}{L}\right| d\rho d\theta\cr
    &&= A\sum_{\substack{|\xi_{m,l,s}|\leq k_{s,\Lambda}}}\frac{1}{2}\left(\frac{2\pi}{L}\frac{\Delta\omega^2}{\mu}+\frac{\Delta\omega}{\sqrt{\mu}}\left(\frac{2\pi}{L}\right)^2\right)\cr
    &&= A\frac{L\Lambda}{2}\left(\frac{2\pi}{L}\frac{\Delta\omega^2}{\mu}+\frac{\Delta\omega}{\sqrt{\mu}}\left(\frac{2\pi}{L}\right)^2\right)\cr
    &&\leq AC(\mu)\omega_{\Lambda}\left(\Delta\omega+L^{-1}\right),\quad z\in\mathcal{G},    
\en
where
\ben
&A:=\Vert\mathcal{F}[S_z](\xi)\Vert_{L^{\infty}}+\sum\limits_{\mathbbm{i,j}=1,2}\Vert|\xi|^{\mathbbm{i}}\pa_{\mathbbm{j}}\mathcal{F}[S_z](\xi)\Vert_{L^{\infty}},\\
&F_{m,l}:=\left\{\xi\in\mathbb R^2\Big|{\rm arg}(\xi)\in\left(\frac{2(l-1)\pi}{L},\frac{2l\pi}{L}\right),(m-1)\Delta\omega<\sqrt{\mu}|\xi|<m\Delta\omega\right\}.
\enn
Using the H\"{o}lder inequality, for $\forall\ \xi\in\mathbb R^2$ and $\mathbbm{j},\mathbbm{i}=1,2$, we have
\begin{align*}
     |\mathcal{F}[S_z](\xi)|
    &=\left|\int_{\mathbb R^2}S_z(x)e^{i\xi\cdot x}dx\right|\leq\int_{\Omega}|S_z(x)|dx\leq C(\Om)\Vert S\Vert_{L^2},\\
    |\xi_{\mathbbm{i}}\partial_{\mathbbm{j}}\mathcal{F}[S_z](\xi)|
    &=|\mathcal{F}[\partial_{\mathbbm{i}}(x_{\mathbbm{j}}S_z)](\xi)|\cr &=\left|\int_{\mathbb R^2}\partial_{\mathbbm{i}}(x_{\mathbbm{j}}S_z(x))e^{i\xi\cdot x}dx\right|\cr
    &\leq\int_{\Omega}|x_{\mathbbm{j}}\partial_{\mathbbm{i}}S_z(x)|+|S_z(x)|dx\cr
    &\leq C(\Omega,\mathcal{G})\Vert S\Vert_{H^1}.
\end{align*}
and 
\begin{align*}
     |\xi^2_{\mathbbm{i}}\partial_{\mathbbm{j}}\mathcal{F}[S_z](\xi)|
    &=|\mathcal{F}[\partial_{\mathbbm{ii}}(x_{\mathbbm{j}}S_z)](\xi)|\cr 
    &=\left|\int_{\mathbb R^2}\partial_{\mathbbm{ii}}(x_{\mathbbm{j}}S_z(x))e^{i\xi\cdot x}dx\right|\cr
    &\leq\int_{\Omega}|x_{\mathbbm{j}}\partial_{\mathbbm{ii}}S_z(x)|+2\delta_{\mathbbm{ij}}\partial_{\mathbbm{i}}S_z(x)|dx\cr
    &\leq C(\Omega,\mathcal{G})\Vert S\Vert_{H^2}.
\end{align*}
We now summarize the three inequalities to obtain $A\leq C(\Omega,\mathcal{G})\Vert S\Vert_{H^2}$ and then insert this into \eqref{firsterror} to conclude that
\begin{align}\label{firsterror-s}
    E_1(z)\leq C(\Omega,\mathcal{G},\mu)\omega_{\Lambda}(\Delta\omega+L^{-1})\Vert S\Vert_{H^2}.
\end{align}
Similarly,
\begin{align}\label{firsterror-p}
    E_2(z)\leq C(\Omega,\mathcal{G},\lambda,\mu)\omega_{\Lambda}(\Delta\omega+L^{-1})\Vert S\Vert_{H^2}.
\end{align}
Furthermore, using again H\"{o}lder inequality, we see from \eqref{exsin2D} and \eqref{dataerror} that
\begin{align}\label{seconderror-s}
   E_3(z)
   &\leq \sum_{\substack{|\xi_{m,l,s}|\leq k_{s,\Lambda}}} \left|\mathbb{F}^{\perp}[S](\xi_{m,l,s})|\xi_{m,l,s}|-u_s^{\infty,\delta}(\hat{\xi}_{m,l,s},|\xi_{m,l,s}|)|\xi_{m,l,s}|\right|\frac{2\pi\Delta\omega}{\sqrt{\mu}L}\cr
    &\leq \sum_{\substack{|\xi_{m,l,s}|\leq k_{s,\Lambda}}}C\delta  \Vert|\xi|\mathcal{F}[S](\xi)\Vert_{L^{\infty}}\frac{2\pi\Delta\omega}{\sqrt{\mu}L}\cr
    &\leq C(\Omega,\mu)\omega_{\Lambda}\delta\Vert S\Vert_{H^1},\quad z\in\mathcal{G}
\end{align}
and similarly
\begin{align}\label{secondererror-p}
   E_4(z)\leq C(\Omega,\mu,\lambda)\omega_{\Lambda}\delta\Vert S\Vert_{H^1},\quad z\in\mathcal{G}.
\end{align}
Furthermore, by the H\"{o}lder inequality, we have   
\begin{equation}
    \label{thirderror-s}
    \begin{split}
        E_5(z)&\leq\int_{|\xi|\geq k_{s,\Lambda}}\left|\mathcal{F}[S](\xi)\right|d\xi\\
    &\leq\left(\int_{|\xi|\geq k_{s,\Lambda}}(1+|\xi|^2)^{-2}d\xi\right)^{1/2}\Vert S\Vert_{H^{2}}\\
    &\leq C(\mu)\omega_{\Lambda}^{-1}\Vert S\Vert_{H^{2}},
    \end{split}
\end{equation}
and similarly
\begin{align}
 \label{thirderror-p}
  E_6(z)\leq C(\mu,\lambda)\omega_{\Lambda}^{-1}\Vert S\Vert_{H^{2}}.
\end{align}
Finally, the estimate \eqref{stab-2-ela} follows by combining \eqref{allerror}-\eqref{thirderror-p}.
\end{proof}
Theorem \ref{stab-2-ela} indicates that the reconstruction resolution can be improved by taking larger $L$ and smaller $\Delta\om$. This will be verified by the later numerical examples in the next section. Note that $\mathcal{F}[S]$ is smooth since $x^{\beta}S(x)\in\left(L^1(\mathbb R^2)\right)^2$ for any multiindex $\beta$. Therefore, the integral in \eqref{indicator-f} can be approximated using a high accuracy formula, then the power of $\Delta\omega$ and $L^{-1}$ in stability estimates can be improved.

In $\R^3$, the observation direction set is chosen by Fibonacci lattices:
\ben
\hx_l=(x_{(l,1)},x_{(l,2)},x_{(l,3)})^T,\quad l=1,2,\cdots,L
\enn
with
\begin{align*}
    x_{(l,3)}=1-\frac{2l}{L},\,
    x_{(l,1)}=\sqrt{1-x_{(l,3)}^2}\cos\left((\sqrt{5}-1)\pi l\right)\,\mbox{and}\,
    x_{(l,2)}=\sqrt{1-x_{(l,3)}^2}\sin\left((\sqrt{5}-1)\pi l\right).
\end{align*}
Note that the Fibonacci lattices are nearly uniformly distributed on the sphere $\mathbb S^2$, thus we approximate the spherical area element $ds_{\hx}$ by ${4\pi}/{L}$. The analogous stability estimates of Theorem \ref{stab-2-ela} in $\mathbb R^3$ can be obtained similarly.
For electromagnetic sources, we take the wave numbers
\ben
k_m=m\Delta k,\ m=1,2,\cdots,\Lambda.
\enn
Given a source term $J$, we also compute  synthetic approximation $E_{\infty}(\hx_l,\omega_m)$ from \eqref{ME-exp} by the trapezoidal rule. We
further perturb this data by random noise as follows,
 \begin{align*}
     E_{\infty,\delta}(\hx_l,k_m)&=\Big(1+\delta\big[N(0,1)+N(0,1)i\big]\Big)E_{\infty}(\hx_l,k_m),\\
    H_{\infty,\delta}(\hx_l,k_m)&=\Big(1+\delta\big[N(0,1)+N(0,1)i\big]\Big)\sqrt{\frac{\varepsilon}{\mu}}\hx_l\times E_{\infty}(\hx_l,k_m).
 \end{align*}
Then, we define
\begin{align}\label{IE}
 I_E(z):=\frac{-2 i\sqrt{\varepsilon}\Delta k}{\pi L}\sum\limits_{\hx\in\Theta_L}\sum_{m=1}^{\Lambda}\left[E_{\infty,\delta}(\hx,k_m)e^{ik_m\hx\cdot z}k_m\right],\ z\in\mathcal{G},
\end{align}
and
\begin{align}
 I_H(z):=\frac{2\sqrt{\mu}\Delta k}{\pi L}\sum\limits_{\hx\in\Theta_L}\sum_{m=1}^{\Lambda}\left[H_{\infty,\delta}(\hx,k_m)e^{ik_m\hx\cdot z}k_m^2\right],\ z\in\mathcal{G}.
\end{align}
Similar to the elastic wave case, we have the following stability estimate. To avoid repetition, we omit the proof.
\begin{theorem}\label{stab-3-maxwell}
    For $J\in\left(H^{3}(\mathbb R^3)\right)^3$ with 
   ${\rm div}J=0$, we have
    \begin{align*}
        |I_E(z)-J(z)|\leq C\left[k^2_{\Lambda}\Big(\delta +\Delta k+L^{-1}\Big)+k^{-3/2}_{\Lambda}\right]\Vert J\Vert_{H^{3}}\quad \mbox{for all}\,\ z\in\mathcal{G}.
    \end{align*}
   For ${\rm curl}J\in\left(H^{3}(\mathbb R^3)\right)^3$, we have 
    \begin{align*}
        |I_H(z)-{\rm curl}J(z)|\leq C\left[k^2_{\Lambda}\Big(\delta +\Delta k+L^{-1}\Big)+k^{-3/2}_{\Lambda}\right]\Vert {\rm curl}J\Vert_{H^{3}}\quad \mbox{for all}\,\ z\in\mathcal{G}.
    \end{align*}
    Here, the constant $C$ depends only on $\varepsilon$, $\mu$, $\Omega$ and $\mathcal{G}$.
\end{theorem}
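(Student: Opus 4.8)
The plan is to recast $I_E$ and $I_H$ as noise-perturbed Riemann sums of a Fourier inversion integral and then mimic, term by term, the argument behind estimate \eqref{If-est-2} of Theorem \ref{stab-2-ela}. Writing $w_{l,m}:=N(0,1)+N(0,1)i$ for the multiplicative noise and using, exactly as in the proof of Theorem \ref{thm-e}, that \eqref{ME-exp} together with ${\rm div}\,J=0$ gives $\hx\times(\mathcal F[J](k\hx)\times\hx)=\mathcal F[J](k\hx)$, one substitutes \eqref{ME-exp} into \eqref{IE} to obtain
\[
 I_E(z)=\frac{\Delta k}{2\pi^{2}L}\sum_{l=1}^{L}\sum_{m=1}^{\Lambda}\bigl(1+\delta\,w_{l,m}\bigr)\,k_m^{2}\,\mathcal F[J](k_m\hx_l)\,e^{ik_m\hx_l\cdot z},
\]
which is precisely the right--rectangle / Fibonacci--lattice quadrature of the polar form of $\mathcal F^{-1}\mathcal F[J](z)=J(z)$ used in Theorem \ref{thm-e}, perturbed by the noise. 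An identical computation, now using $\sqrt{\varepsilon/\mu}\,\hx\times E_\infty(\hx,k)=\tfrac1{4\pi\sqrt\mu}\mathcal F[{\rm curl}\,J](k\hx)$ (which needs no hypothesis on ${\rm div}\,J$, since $\mathcal F[{\rm curl}\,J](\xi)$ is automatically orthogonal to $\xi$), shows that $I_H$ is the very same quadrature with $J$ replaced by ${\rm curl}\,J$. Hence it suffices to prove the estimate for $I_E$; applying it to the field ${\rm curl}\,J\in\bigl(H^{3}(\R^3)\bigr)^3$ immediately yields the estimate for $I_H$, and also explains the appearance of $\Vert{\rm curl}\,J\Vert_{H^3}$ there.

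Next, in complete analogy with \eqref{allerror}, I would split $I_E(z)-J(z)$ into three groups: (i) a data--noise term proportional to $\delta$; (ii) a quadrature term comparing $\frac1{(2\pi)^3}\int_{|\xi|\leq k_\Lambda}\mathcal F[J](\xi)e^{i\xi\cdot z}d\xi$ with the discrete sum; and (iii) the high--frequency tail $\frac1{(2\pi)^3}\int_{|\xi|>k_\Lambda}\mathcal F[J](\xi)e^{i\xi\cdot z}d\xi$. For (iii), Cauchy--Schwarz against the $H^3$ weight bounds the term by $\bigl(\int_{|\xi|>k_\Lambda}(1+|\xi|^2)^{-3}d\xi\bigr)^{1/2}\Vert J\Vert_{H^3}$, and in $\R^3$ this integral is of order $k_\Lambda^{-3}$, producing the $k_\Lambda^{-3/2}\Vert J\Vert_{H^3}$ term; this is precisely why three Sobolev derivatives (rather than the two that suffice in the plane) are needed. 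For (i), Hölder's inequality as in \eqref{seconderror-s} bounds the term by $C\delta$ times $\tfrac{\Delta k}{L}$ times a sum of $L\Lambda$ quantities $|k_m E_\infty(\hx,k_m)|\leq C k_m^{2}|\mathcal F[J](k_m\hx)|$, giving a bound of the form $C k_\Lambda^{2}\,\delta\,\Vert J\Vert_{H^3}$. For (ii), I would pass to polar coordinates $\xi=\rho\omega$, $d\xi=\rho^{2}\,d\rho\,ds_\omega$, split the error into the radial part (right--rectangle rule in $\rho$ with step $\Delta k$) and the spherical part (replacement of $\int_{\mathbb S^2}ds_\omega$ by $\tfrac{4\pi}{L}\sum_{\hx\in\Theta_L}$), and, exactly as in \eqref{firsterror}, apply a cell-by-cell mean value estimate. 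All the derivative bounds that arise reduce to $L^\infty$ norms of weighted derivatives $|\xi|^{a}\partial_{j}\mathcal F[J_z]$ with $J_z(x):=J(x+z)$ and $0\leq a\leq 3$; each such quantity equals, up to sign, $\mathcal F[\partial^{\alpha}(x^{\beta}J_z)]$ for suitable multi-indices with $|\alpha|\leq 3$, and is therefore $\leq C(\Omega,\mathcal G)\Vert J\Vert_{H^3}$ by Hölder's inequality on the fixed support $\Omega$. Counting the $L\Lambda$ cells, with cell volumes of order $k_m^{2}\Delta k/L$ and the weight $k_m^{2}$, then yields the $k_\Lambda^{2}(\Delta k+L^{-1})\Vert J\Vert_{H^3}$ contribution. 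Combining (i)--(iii) and using $\Lambda\Delta k=k_\Lambda$ gives the claimed estimate with $C=C(\varepsilon,\mu,\Omega,\mathcal G)$.

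The one place where the argument genuinely departs from the planar proof -- and which I expect to be the main obstacle -- is the spherical quadrature over the Fibonacci lattice $\Theta_L$. In $\R^2$ the observation directions are equispaced, so the angular cells are exact arcs of length $2\pi/L$ and the cell-wise errors combine cleanly as in \eqref{firsterror}. In $\R^3$ the Fibonacci lattice only furnishes a partition of $\mathbb S^2$ into cells $C_l$ of nearly equal area $\tfrac{4\pi}{L}(1+o(1))$ and diameter $O(L^{-1/2})$, so one has to invoke the (well documented) near-uniform distribution and low spherical-cap discrepancy of Fibonacci lattices, both to control $\bigl|\,|C_l|-\tfrac{4\pi}{L}\,\bigr|$ and to turn the pointwise Lipschitz information into an $O(L^{-1})$-type quadrature error for the smooth function $\hx\mapsto\sum_m k_m^{2}\mathcal F[J](k_m\hx)e^{ik_m\hx\cdot z}$ on $\mathbb S^2$. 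This is also where the bookkeeping of the powers of $k_\Lambda$ is most delicate; crude bounds that do not exploit the full decay of $\mathcal F[J]$ inside the ball $|\xi|\leq k_\Lambda$ already suffice for the stated exponents, while a sharper treatment of the radial quadrature (cf.\ the remark following Theorem \ref{stab-2-ela}) would improve the powers of $\Delta k$ and $L^{-1}$.
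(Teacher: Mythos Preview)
Your proposal is correct and follows precisely the route the paper intends: the paper itself omits the proof of this theorem, stating only that it is ``similar to the elastic wave case'' (i.e., to the proof of Theorem~\ref{stab-2-ela}), and your three-term decomposition into noise, quadrature, and high-frequency tail, together with the Fourier identities you derive for $I_E$ and $I_H$ and the bookkeeping that forces the $H^3$ norm and the $k_\Lambda^{-3/2}$ tail exponent in dimension three, is exactly that adaptation. Your discussion of the Fibonacci-lattice quadrature is in fact more careful than the paper's, which simply asserts near-uniform distribution and takes the $O(L^{-1})$ spherical quadrature bound for granted.
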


\section{Numerical simulations}
Now we turn to present four groups of numerical examples to illustrate the effectiveness and robustness of the proposed quantitative sampling method.
Without loss of generality, the numerical simulations for elastic sources and  electromagnetic sources are presented in $\mathbb R^2$ and $\mathbb R^3$, respectively.

\subsection{Reconstruction for elastic wave source in $\mathbb R^2$.}
For elastic source reconstructions, we present two numerical examples by setting
\ben
\lambda = \mu = 1,\, \Delta \omega=0.5,\, \delta=0.3.
\enn
The grids are equally distributed on the rectangle $[-3,3]^2$ with sampling distance $0.01$. 

 \begin{itemize}
     \item \textbf{Example one: source $S\in\left(L^2(\mathbb R^2)\right)^2$
    .}
 \end{itemize}
In the first example, $S=(S_1,S_2)^T$ with compact support $\Omega$, where
\begin{align*}
    S_1(z)=|z|+5,\quad{\rm and}\quad S_2(z)=e^{0.1|z|^2}+4,\ \forall\ z\in\Omega.
\end{align*}
We refer to Figure \ref{source} for the true $S$ and $\Omega$.
\begin{figure}[htbp]
   \centering
    \begin{tabular}{cc}
        \subfigure[Image of $S_1$.]{
            \label{sourceS1}
        \includegraphics[width=0.3\textwidth]{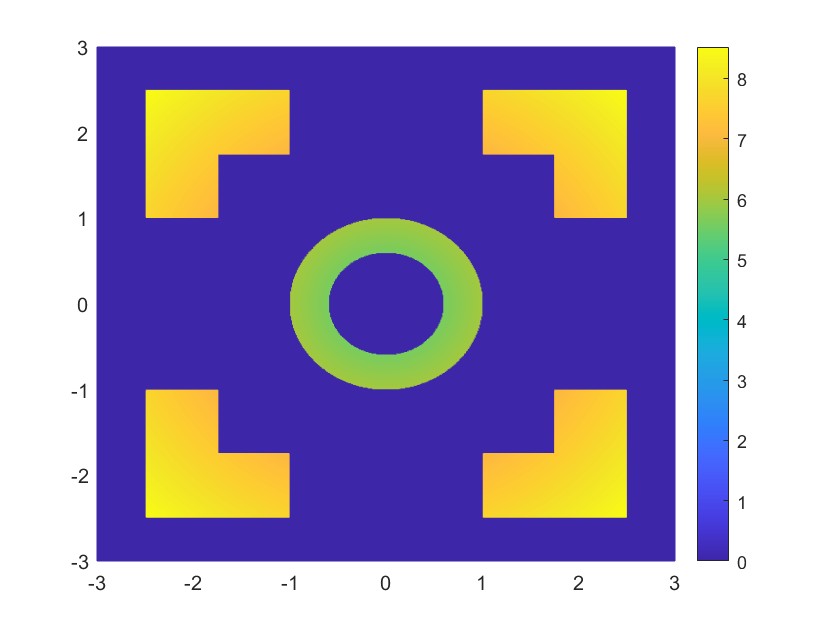}
        }&
        \subfigure[Image of $S_2$.]{
            \label{sourceS2}
            \includegraphics[width=0.3\textwidth]{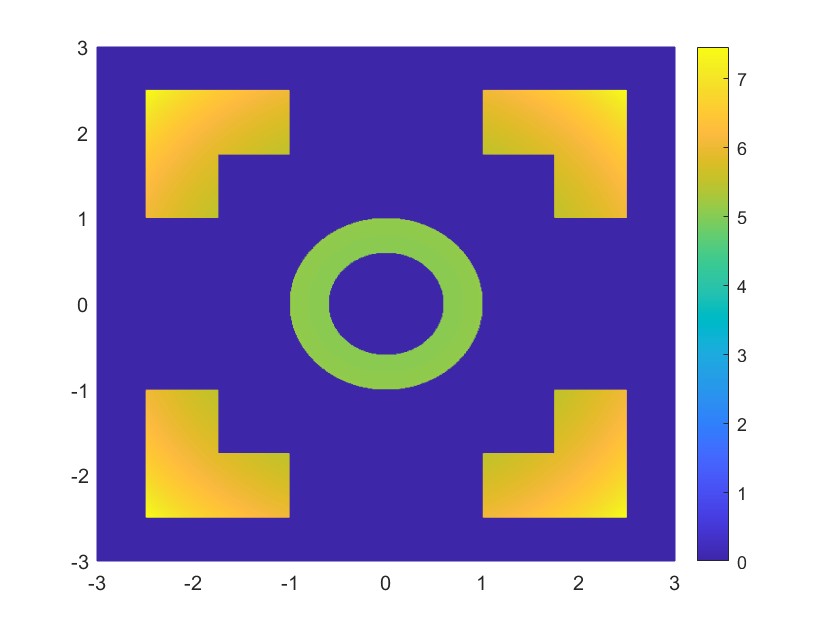}
        }
    \end{tabular}
    \caption{True source function $S$ and its support $\Omega$ in Example one.}
    \label{source}
\end{figure}
To further present the effectiveness of the indicator function $I_f$, we also show the difference of $S(z)$ and $I_f(z)$ by the following indicator
\be\label{Ieps}
\left(I^{\epsilon}_F(z)\right)_{(i)}:=\left\{
    \begin{array}{ll}
        1, &  {\rm if}\, |\left(S(z)-I_f(z)\right)_{(i)}|>\epsilon,\\
        0, & {\rm otherwise}.
    \end{array}
    \right.
\en
Here, $g_{(i)}$ denote the $i$-th component of an vector function $g$, $i=1,2$. Considering the $30\%$ relative noise in the far field patterns and the fact $S(z)>5,\ \forall\ z\in\Omega$, we set the threshold $\epsilon=5\times0.3=1.5$. 
\begin{figure}[htbp]
\centering
    \begin{tabular}{ccc}
        \subfigure[$L=31$.]{
            \label{L-31-S1}
            \includegraphics[width=0.3\textwidth]{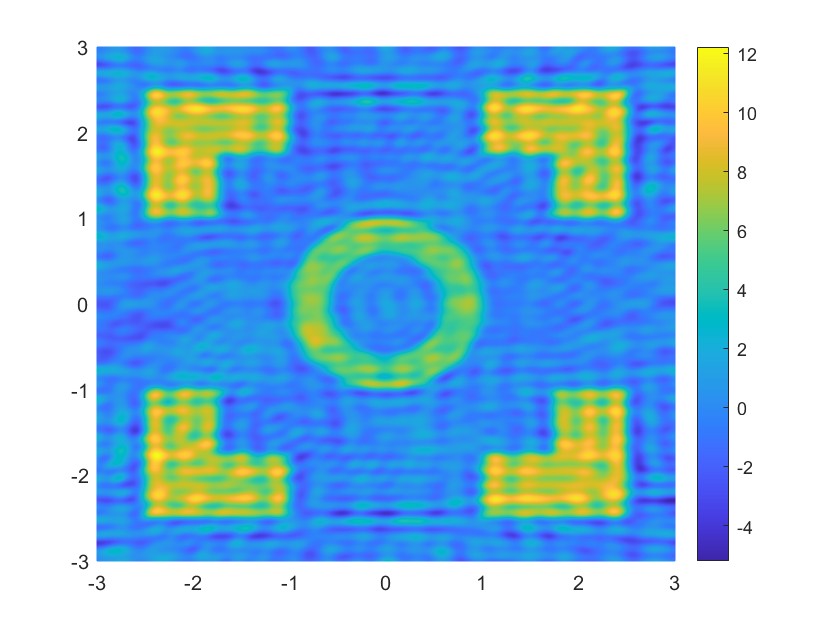}
        }&
        \subfigure[$L=51$.]{
            \label{L-51-S1}
            \includegraphics[width=0.3\textwidth]{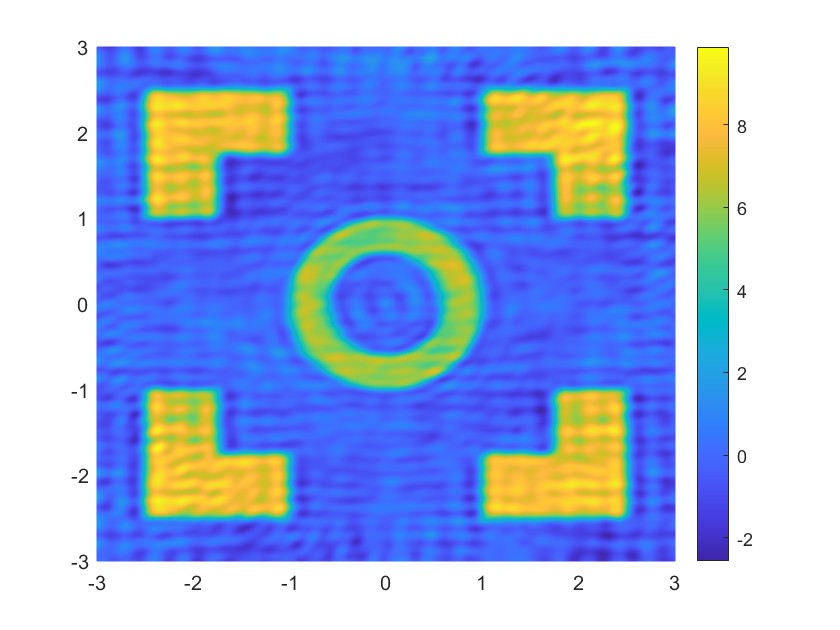}
        }&
         \subfigure[$L=71$.]{
            \label{L-71-S1}
            \includegraphics[width=0.3\textwidth]{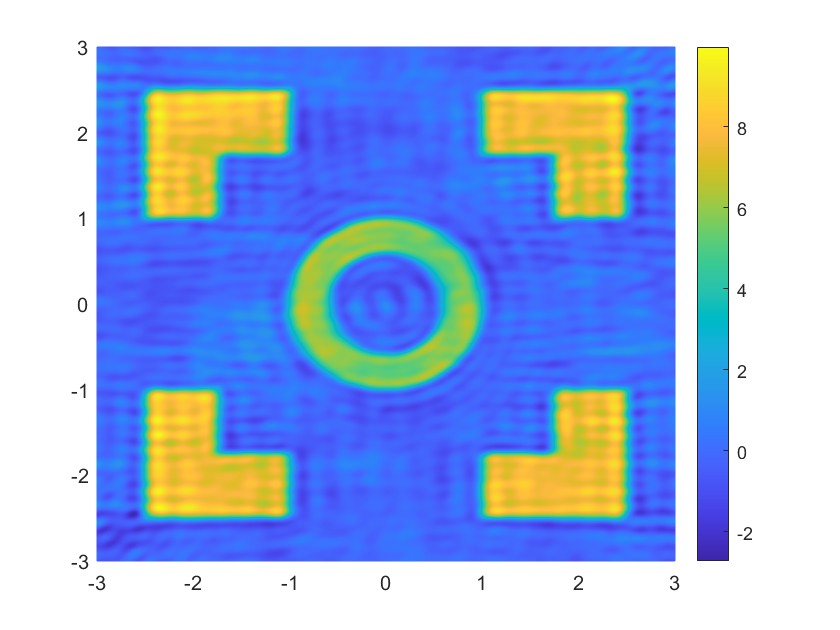}
        }\\
                \subfigure[$L=31$.]{
            \label{L-31-S1diff}
            \includegraphics[width=0.3\textwidth]{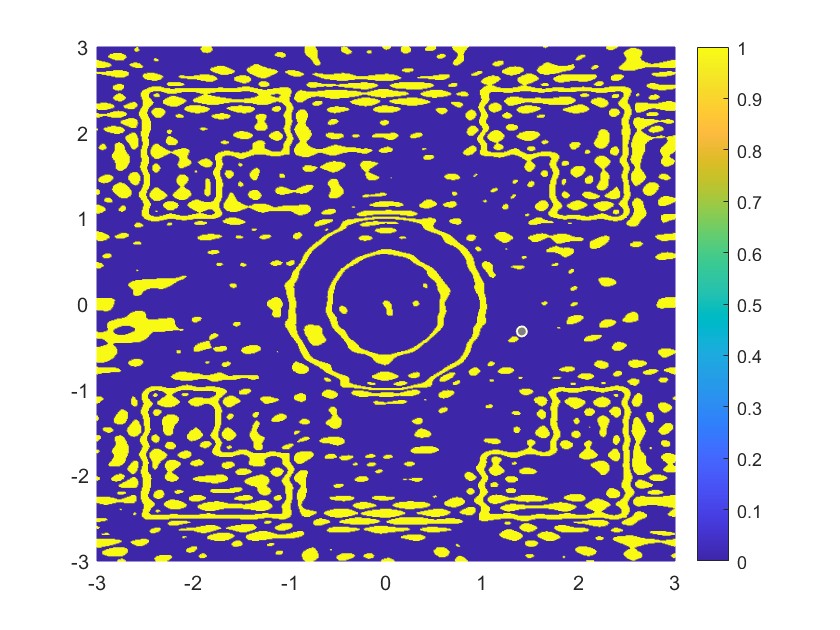}
        }&
        \subfigure[$L=51$.]{
            \label{L-51-S1diff}
            \includegraphics[width=0.3\textwidth]{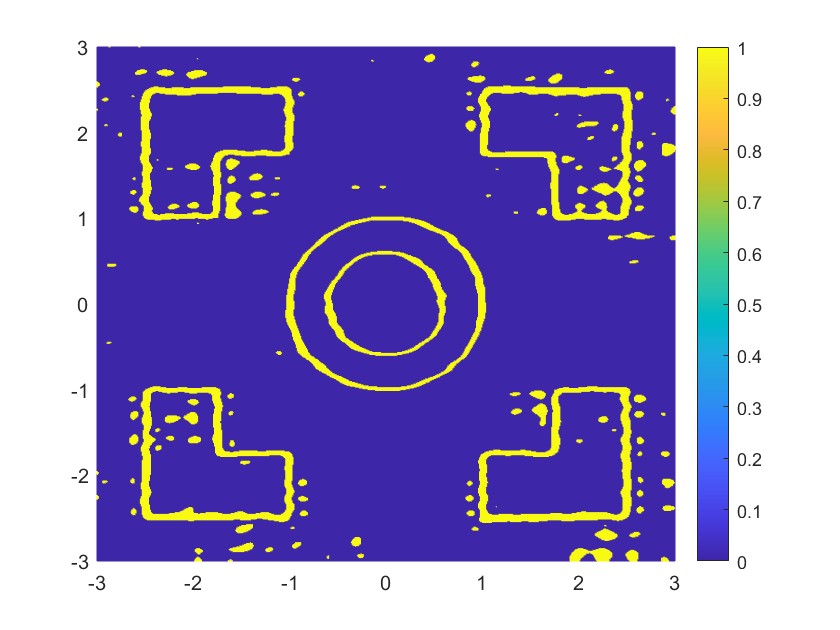}
        }&
         \subfigure[$L=71$.]{
            \label{L-71-S1diff}
            \includegraphics[width=0.3\textwidth]{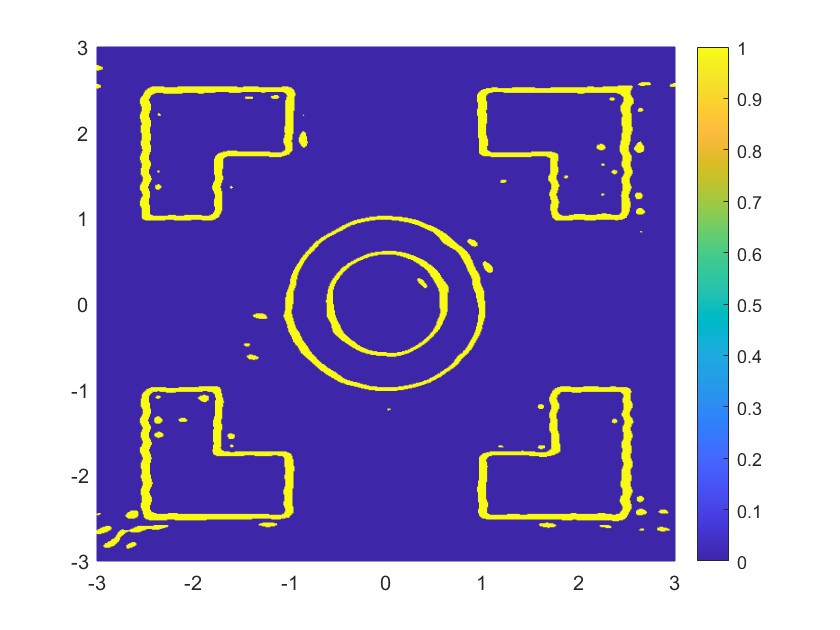}
        }
    \end{tabular}
    \caption{Reconstructions with $\omega_{\Lambda}=40$ and different $L$. Top row: reconstructions by plotting $(I_f)_{(1)}$. Bottom row: reconstructions by plotting $(I_f^{\epsilon})_{(1)}$.}
    \label{IF1-L}
\end{figure}
\begin{figure}[htbp]
   \centering
    \begin{tabular}{ccc}
        \subfigure[$\omega_{\Lambda}=30$.]{
            \label{lambda-30-S2}
        \includegraphics[width=0.3\textwidth]{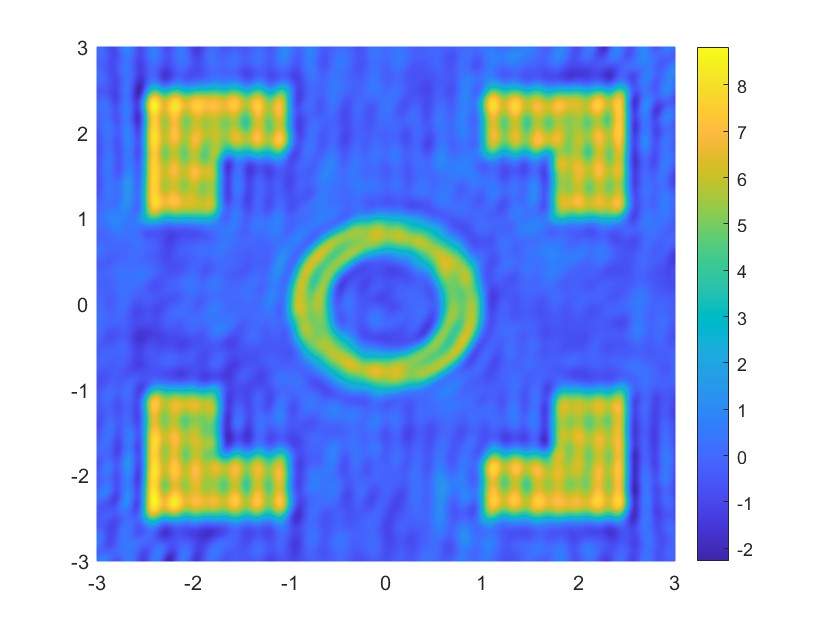}
        }&
        \subfigure[ $\omega_{\Lambda}=40$.]{
            \label{lambda-40-S2}
            \includegraphics[width=0.3\textwidth]{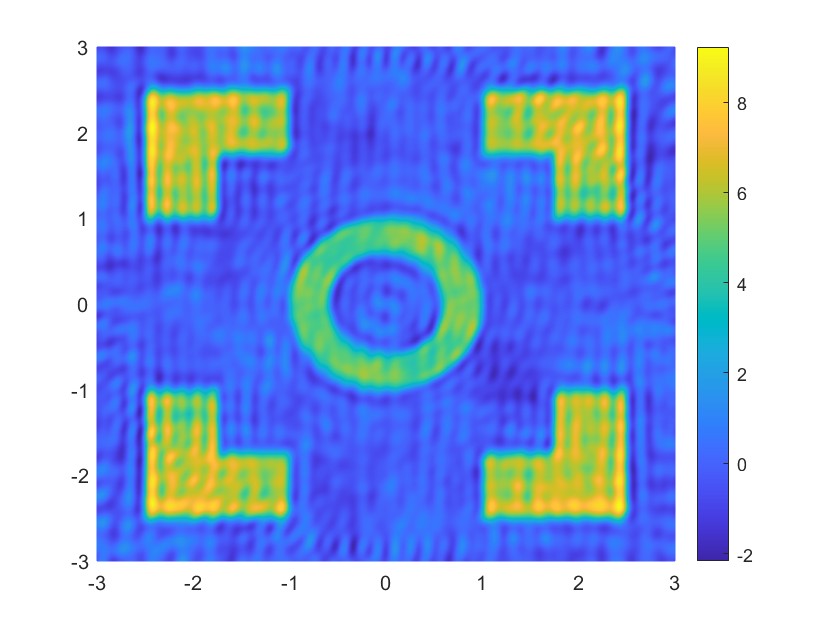}
        }&
         \subfigure[$\omega_{\Lambda}=50$.]{
            \label{lambda-50-S2}
            \includegraphics[width=0.3\textwidth]{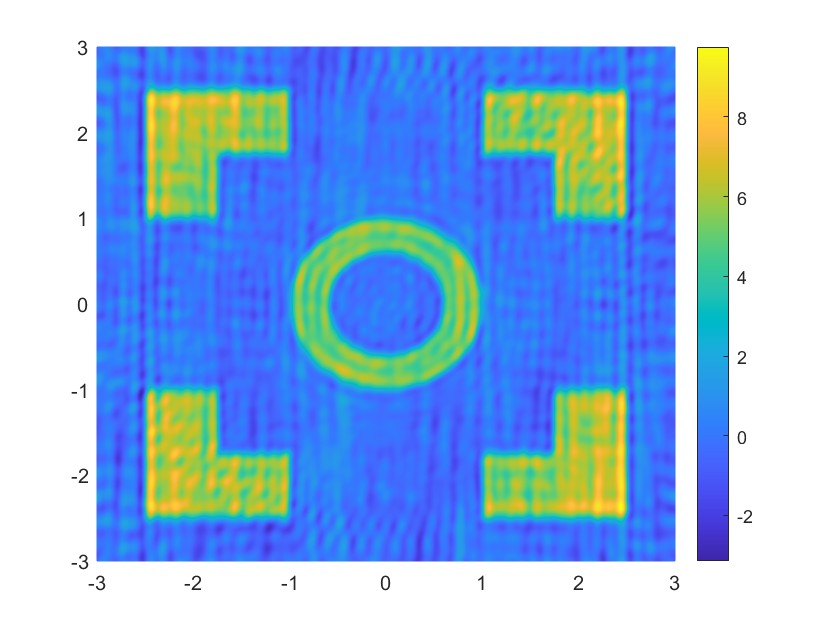}
        }\\
        \subfigure[$\omega_{\Lambda}=30$.]{
            \label{lambda-30-S2diff}
            \includegraphics[width=0.3\textwidth]{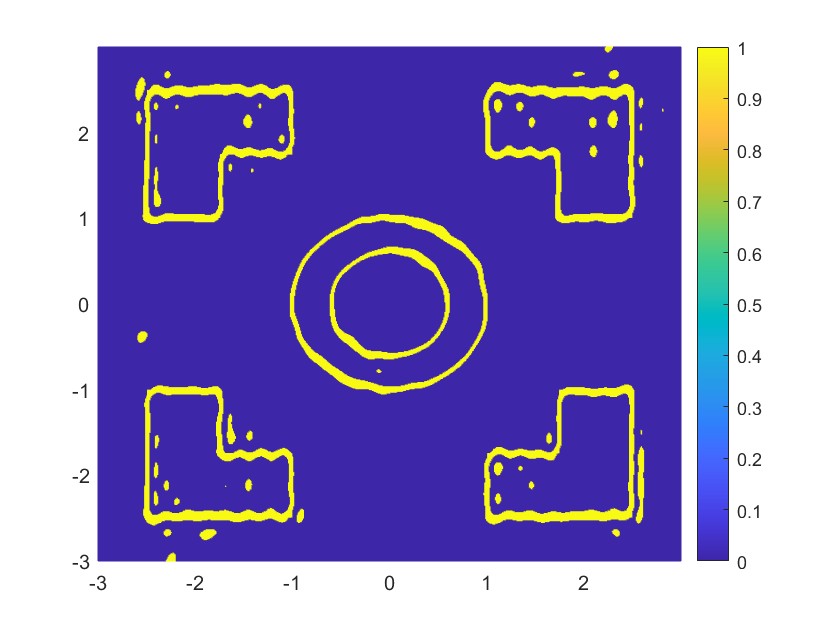}
        }&
        \subfigure[ $\omega_{\Lambda}=40$.]{
            \label{lambda-40-S2diff}
            \includegraphics[width=0.3\textwidth]{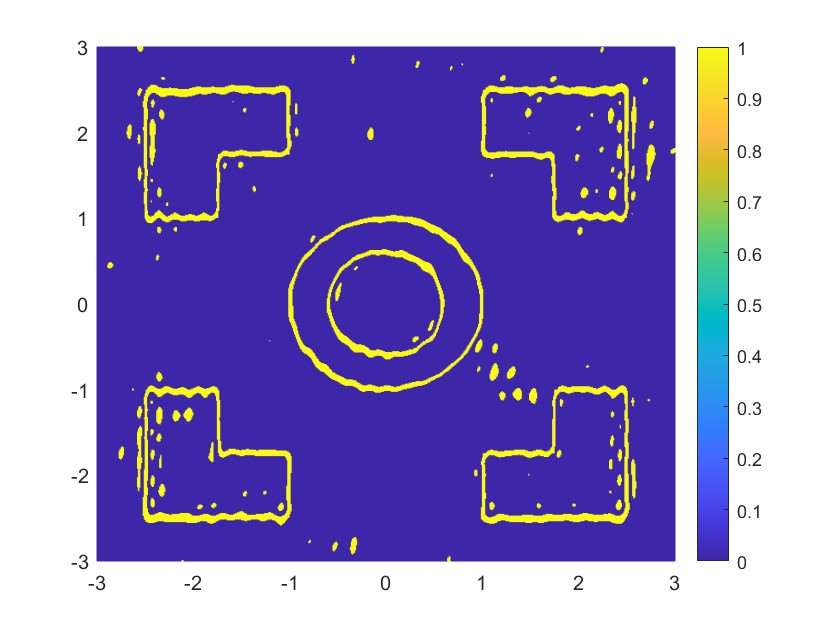}
        }&
         \subfigure[ $\omega_{\Lambda}=50$.]{
            \label{lambda-50-S2diff}
            \includegraphics[width=0.3\textwidth]{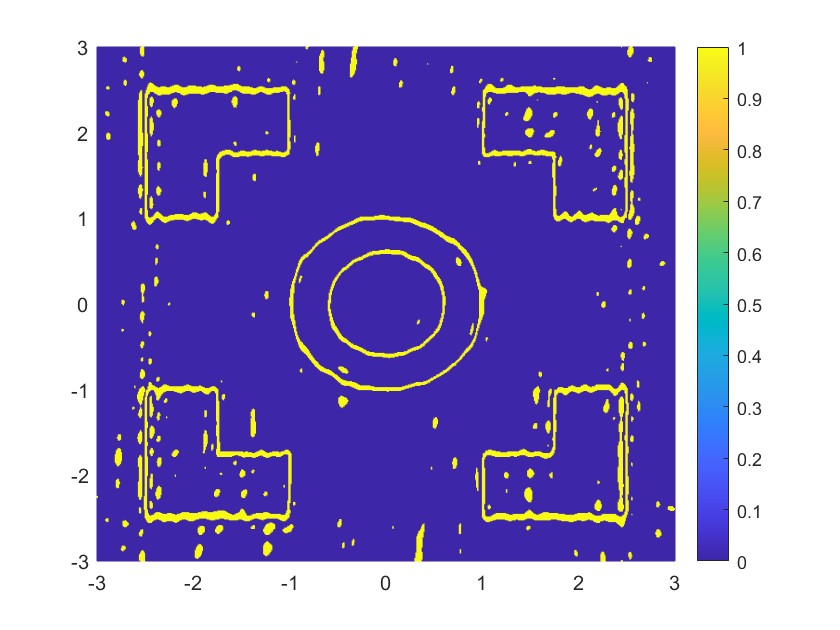}
        }
    \end{tabular}
    \caption{Reconstructions with $L=51$ and different $\omega_{\Lambda}$. Top row: reconstructions by plotting $(I_f)_{(2)}$. Bottom row: reconstructions by plotting $(I_f^{\epsilon})_{(2)}$.}
    \label{IF2-lambda}
\end{figure}
Figure \ref{IF1-L} displays the reconstructions for $S_1$ with various $L$, while Figure \ref{IF2-lambda} presents the reconstructions for $S_2$ with different $\omega_{\Lambda}$. The quality of the reconstructions of $S$ can be improved by increasing $\omega_{\Lambda}$ and $L$. We want to remark that the reconstructions of the indicator function $I_f$ always have a significant error near $\partial\Omega$. This is reasonable because $S$ is discontinuous at $\partial \Omega$. We can divide $\mathbb R^2$ into 'the neighborhood (of $\partial \Omega$)' and 'other regions'. Generally speaking, as $\omega_{\Lambda}$ increases, the neighborhood becomes 'thinner', but when $L$ increases, the neighborhood remains almost unchanged. Meanwhile, as $L$ increases, the reconstructions are significantly improved in other regions, but when $\omega_{\Lambda}$ increases, the improvement of the reconstructions is not noticeable in other regions. This phenomenon implies that the multidirectional data are more important for reconstructing continuous parts of source $S$ while the high-frequency data are more important for reconstructing discontinuous parts of source $S$.

Based on Theorems \ref{thm-ps} and \ref{stab-2-ela}, $I_p$ and and $I_s$ are able to reconstruct ${\rm div}S$ and ${\rm div}^{\perp}S$, respectively. 
\begin{figure}[htbp]
   \centering
    \begin{tabular}{cc}
        \subfigure[Image of $I_p$.]{
            \label{sourcediv}
        \includegraphics[width=0.3\textwidth]{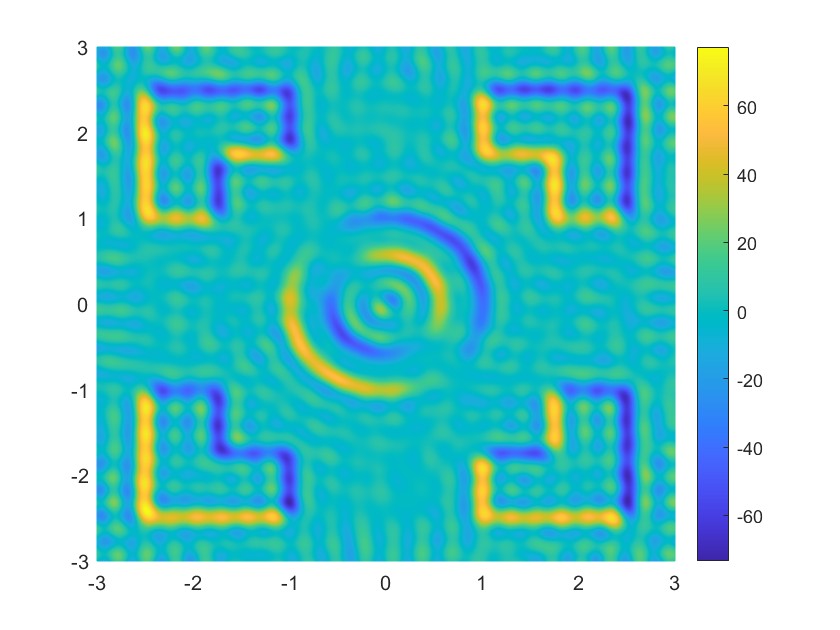}
        }&
        \subfigure[Image of $I_s$.]{
            \label{sourcedivperp}
            \includegraphics[width=0.3\textwidth]{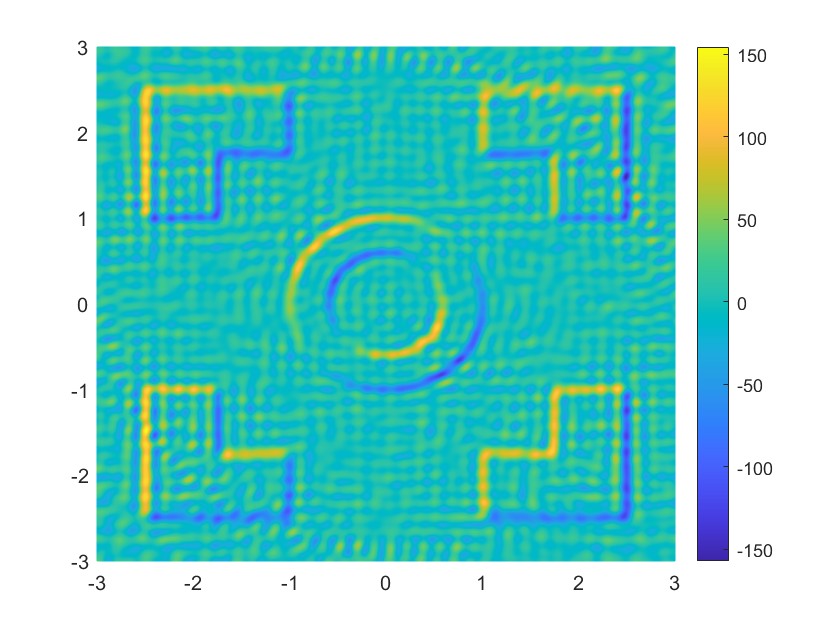}
        }
    \end{tabular}
    \caption{Reconstructions of $I_p$ and $I_s$ for Example one with $L=51$, $\omega_{\Lambda}=40$.}
    \label{source-div-divperp}
\end{figure}
 Figure \ref{source-div-divperp} show the reconstructions of $I_p$ and $I_s$ for the source $S$. Note that the source $S$ only belongs to
  $\left(H^1_{loc}\left(\mathbb R^2\backslash\partial\Omega\right)\right)^2$, it is understandable that $I_p$ and $I_s$ blow up near the $\partial\Omega$. This phenomenon also implies that, for the sources $S$ with non-trivial traces on $\partial\Omega$, $I_p$ and $I_s$ can be used to roughly recover $\partial\Omega$. Besides, we also observe that the indicator $I_s$ produces better reconstruction than $I_p$, which is in accordance with the fact that $k_s>k_p$. 
\begin{itemize}
    \item \textbf{Example two: smooth source.}
\end{itemize}
In the second example, we take $S=(S_1, S_2)^T$ with compact support $\Omega=\Omega_1\bigcup\Omega_2$, here for $\mathbbm{i}=1,2$,
\begin{equation*}
    S_\mathbbm{i}(z)=\left\{
    \begin{aligned}
        &(10|z+(-1)^\mathbbm{i}(1,1)^T|^4-11.6|z+(-1)^\mathbbm{i}(1,1)^T|^2+1.6)^2,\ &z\in\Omega_\mathbbm{i},\\
        &0,\ &z\in\mathbb R^2\backslash\Omega_{\mathbbm{i}},
    \end{aligned}
    \right.
\end{equation*}
with $\Omega_\mathbbm{i}:=\left\{z\in\mathbb R^2\big|0.4\leq|z+(-1)^\mathbbm{i}(1,1)^T|\leq 1\right\}$.
As shown in Figure \ref{ex2-if} (a,d), $\Om_1\cap\Om_2=\emptyset$. Moreover, such a source function has better regularity than the one in the first example.
To characterize the effectiveness of $I_f$, we define 
\begin{align*}
    e_F:=\frac{\Vert I_f-S\Vert_{L^2(\mathcal{G})}}{\Vert S\Vert_{L^2(\mathcal{G})}}.
\end{align*}
Table \ref{table-relativeerror} shows clearly that the relative error $e_F$ decreases with the increasing of $\omega_{\Lambda}$ or $L$. In particular, $e_F$ is much smaller than the data noise $\delta=0.3$. We also see from Table \ref{table-relativeerror-del} that $e_F$ indeed decreases for smaller $\Delta \om$, which is consistent with the stability estimates in previous section.
\begin{table}
     \centering
     \begin{tabular}{|c|c|c|c|c|}
     \hline
    \diagbox{$\omega_{\Lambda}$}{$e_F$}{$L$}&51&101&151&201\\
    \hline
    30&0.1584&0.1193&0.0973&0.0947\\
    \hline
    40&0.1494&0.1104&0.0899&0.0790\\
    \hline
    50&0.1459&0.1043&0.0835&0.0759\\
     \hline
     \end{tabular}
    \caption{Relative error of reconstructions using full  far field patterns with different $\omega_{\Lambda}$ and $L$. 
    Note the we have considered $30\%$ relative noise in the measurements.}
     \label{table-relativeerror}
\end{table} 
\begin{table}
     \centering
     \begin{tabular}{|c|c|c|c|c|}
     \hline
    \diagbox{{\footnotesize$\Delta\omega$}}{$e_F$}{$\delta$}&0.05&0.10&0.15&0.20\\
    \hline
    1/2&0.0511&0.0656&0.0830&0.1060\\
    \hline
    1/4&0.0455&0.0527&0.0659&0.0806\\
    \hline
    1/8&0.0439&0.0483&0.0553&0.0617\\
     \hline
     \end{tabular}
    \caption{Relative error of reconstructions using full  far field patterns with different $\delta$ and $\Delta\omega$. 
    Here we set $L=51$, $\omega_\Lambda=40$.}
     \label{table-relativeerror-del}
\end{table} 
Figure \ref{ex2-if} shows the reconstruction of the source function $S$ by $I_f$. 
Comparing Figures \ref{IF1-L}(d-f), \ref{IF2-lambda}(d-f) with Figure \ref{ex2-if}(c,f), we observe that the indicator $I_f$ gives a higher resolution reconstruction for the smoother sources. 
\begin{figure}[htbp]
   \centering
    \begin{tabular}{ccc}
        \subfigure[True source $S_1$.]{
            \label{sourceH1S1}
        \includegraphics[width=0.3\textwidth]{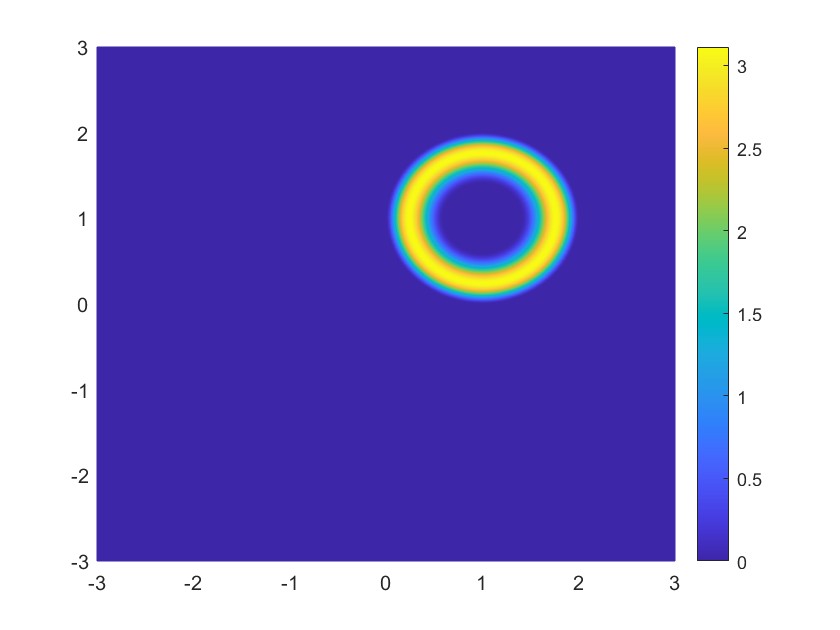}
        }&
         \subfigure[Image of $(I_f)_1$.]{
            \label{H1IF1}
        \includegraphics[width=0.3\textwidth]{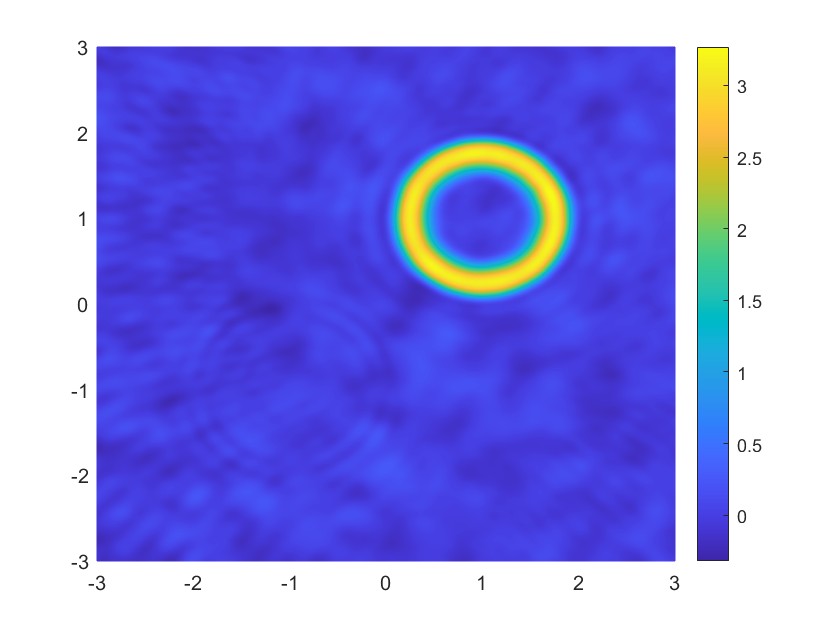}
        }&
        \subfigure[Image of $|S_1-(I_f)_1|$.]{
            \label{H1diff1}
            \includegraphics[width=0.3\textwidth]{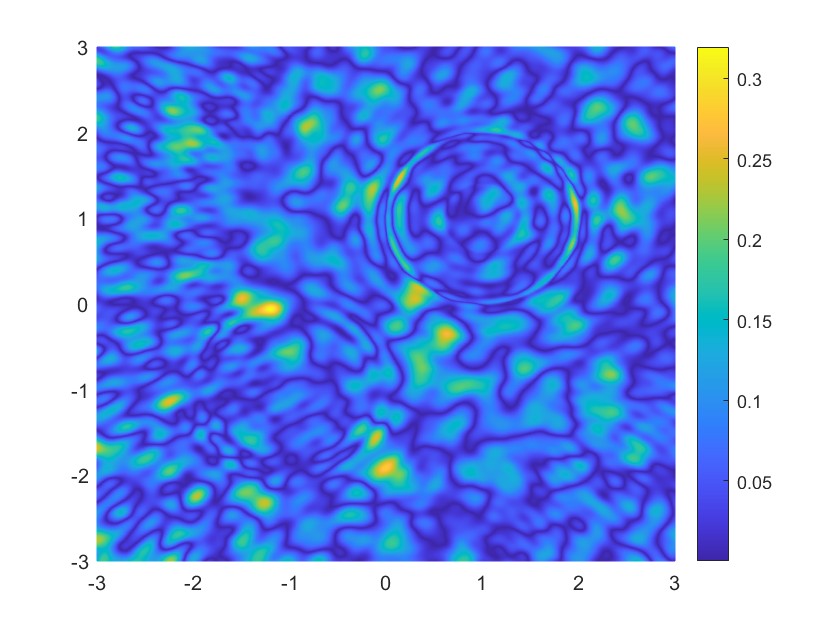}
        }\\
        \subfigure[True source $S_2$.]{
            \label{sourceH1S2}
            \includegraphics[width=0.3\textwidth]{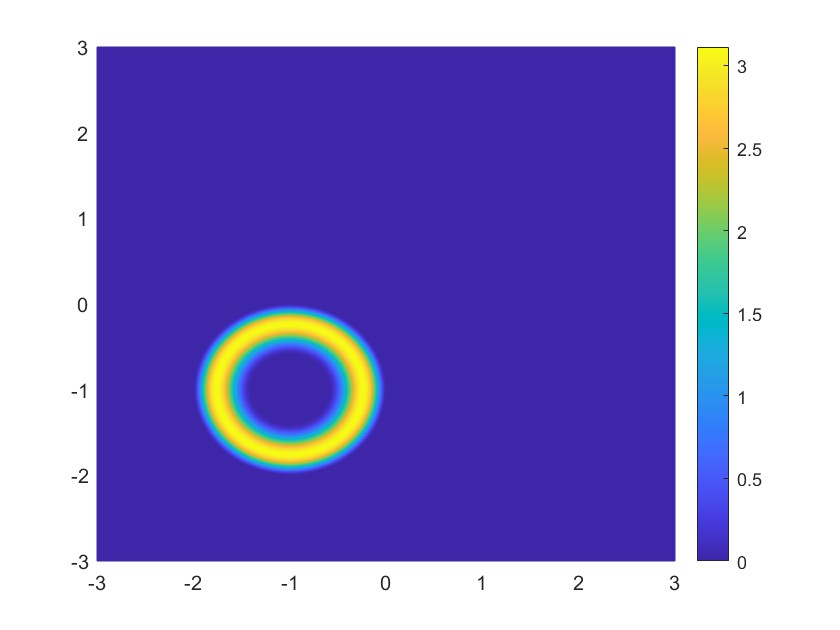}
        }&
        \subfigure[Image of $(I_f)_2$.]{
            \label{H1IF2}
            \includegraphics[width=0.3\textwidth]{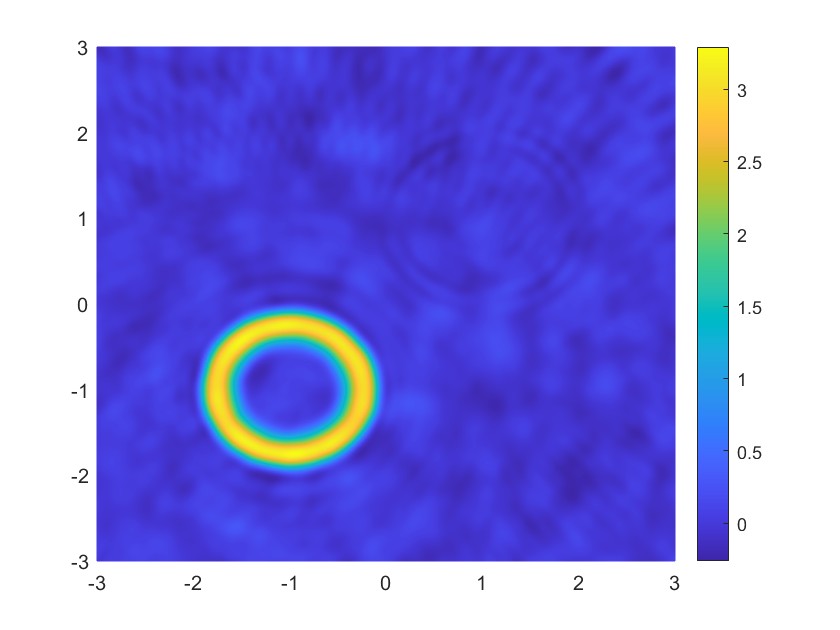}
        }&
        \subfigure[ Image of $|S_2-(I_f)_2|$.]{
            \label{H1diff2}
            \includegraphics[width=0.3\textwidth]{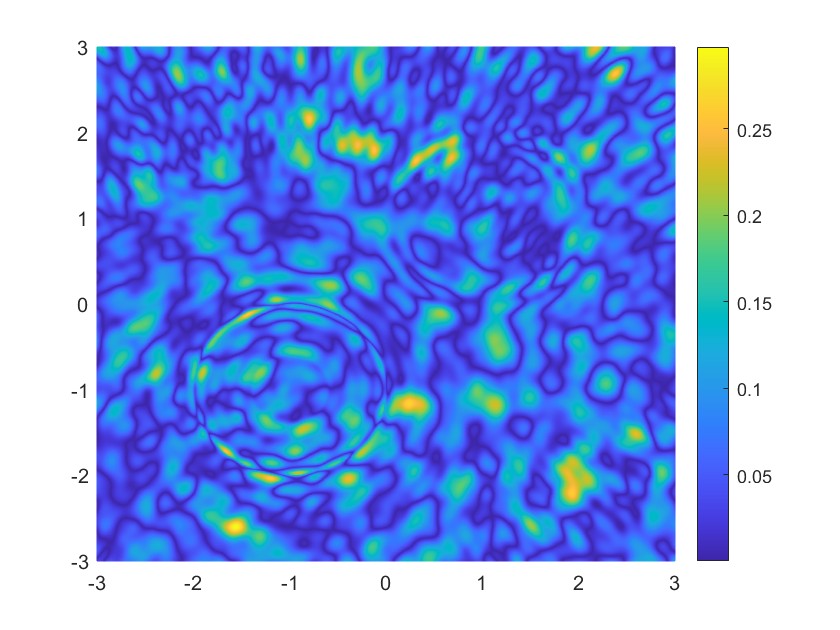}
        }
    \end{tabular}
    \caption{Reconstructions of $S_1$ and $S_2$ by $I_f$. We take $L=51$ and $\omega_{\Lambda}=40$.}
    \label{ex2-if}
\end{figure}
Furthermore, we investigate the effect of indicator function $I_p$ and $I_s$. Figure \ref{ex2-ipis} show that indicator functions $I_p$ and $I_s$ indeed give a quite robust and good reconstruction for ${\rm div} S$ and ${\rm div^{\perp}} S$, respectively. This is in accordance with Theorem \ref{thm-ps}.
Moreover, we also observe that, using the partial far field patterns, the source support $\Om$ is well recovered.
\begin{figure}[htbp]
   \centering
    \begin{tabular}{cc}
        \subfigure[True ${\rm div}S$.]{
            \label{sourcediv-2}
        \includegraphics[width=0.3\textwidth]{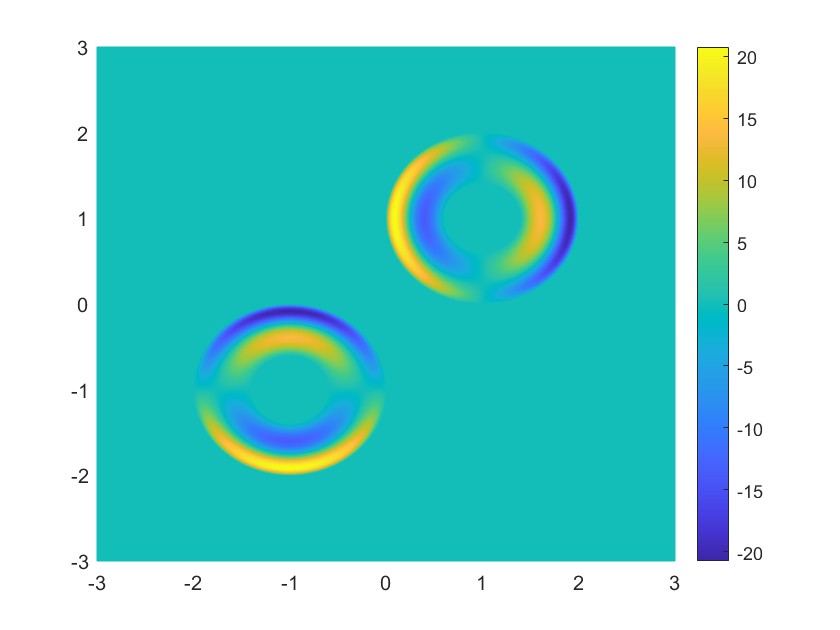}
        }&
        \subfigure[Image of $I_p$.]{
            \label{IP-2}
            \includegraphics[width=0.3\textwidth]{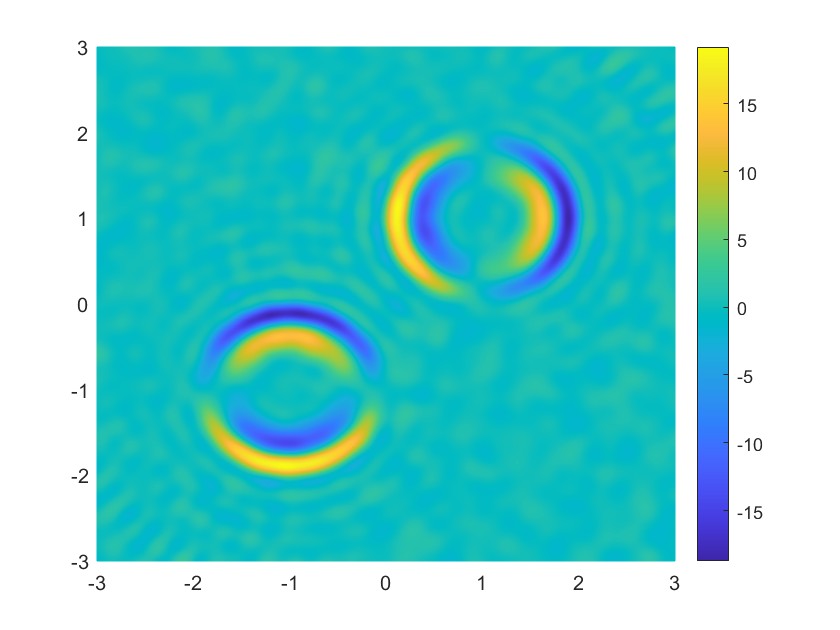}
        }\\
        \subfigure[True ${\rm div}^{\perp}S$.]{
            \label{sourcedivperp-2}
            \includegraphics[width=0.3\textwidth]{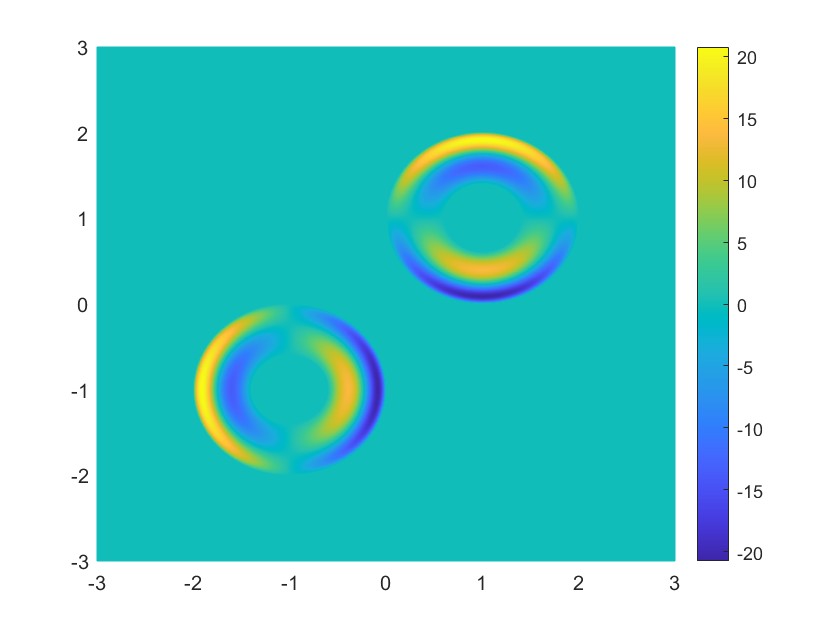}
        }&
        \subfigure[ Image of $I_s$.]{
            \label{IS-2}
            \includegraphics[width=0.3\textwidth]{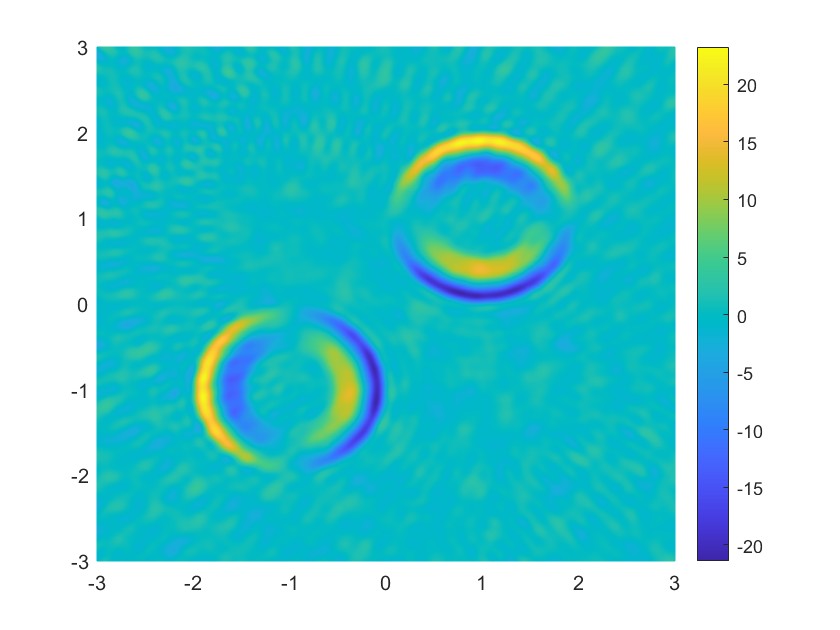}
        }
    \end{tabular}
    \caption{Reconstructions of ${\rm div}S$ and ${\rm div}^{\perp}S$ by $I_p$ and $I_s$, respectively. We take $L=51$ and $\omega_{\Lambda}=40$.}
    \label{ex2-ipis}
\end{figure}

\subsection{Reconstruction for electromagnetic sources in $\mathbb R^3$.}

For electromagnetic sources, we set 
 \ben
 \delta=0.1, \quad \Delta k=0.5.
 \enn
 The sampling grids are equally distributed in the cube $[-1,1]^3$ with sampling distance $0.01$. 
 
 \begin{itemize}
     \item  \textbf{Example three: the divergence free source $J\in\left(L^2(\mathbb R^3)\right)^3$.}
 \end{itemize}
In the third example, we set $J(z)=(4,4,4)^T+{\rm curl }(|z|^2,e^{|z|^2},1)^T,\quad z\in \Omega$ 
such that ${\rm div}J=0$. The source support $\Omega:={\rm supp}(J)=\Omega_1\cup\Omega_2$ with
\begin{align*}
    \Omega_1:=\left\{z=(z_1,z_2,z_3)^T\big|\ |z_1|\leq 0.5,\ |z_2|\leq 0.5,\ - 0.5\leq z_3\leq 0\right\},
\end{align*}
and
\begin{align*}
    \Omega_2:=\left\{z=(z_1,z_2,z_3)^T\big|0\leq z_1\leq 0.5,\ 0\leq z_2\leq 0.5,\ 0\leq z_3\leq 0.5\right\}.
\end{align*}

Three slices of the true source function $J$ are shown in Figure \ref{sourceJ}. Figure \ref{IndE-Omega_40} and \ref{IndE-L_151} show the reconstructions by varying $L$ and $k_\Lambda$, respectively. Obviously, the reconstruction quality can be improved by increasing $L$ or $k_{\Lambda}$. 
\begin{figure}[htbp]
   \centering
    \begin{tabular}{ccc}
        \subfigure[Slices of $J_1$ at $z_1=\pm 0.25$.]{
            \label{sourceJ1}
        \includegraphics[width=0.3\textwidth]{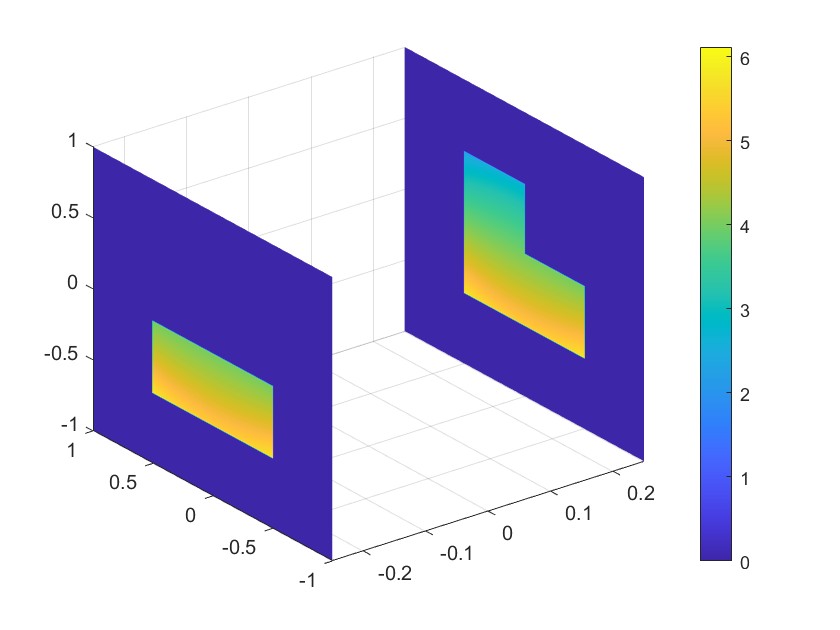}
        }&
        \subfigure[Slices of $J_2$ at $z_2=\pm 0.25$.]{
            \label{sourceJ2}
            \includegraphics[width=0.3\textwidth]{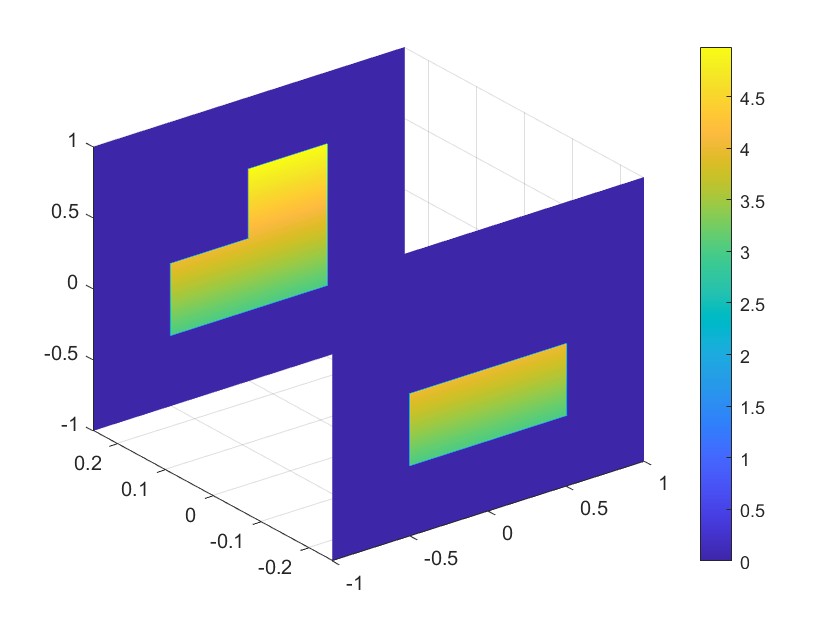}
        }&
        \subfigure[Slices of $J_3$ at $z_3=\pm 0.25$.]{
            \label{sourceJ3}
            \includegraphics[width=0.3\textwidth]{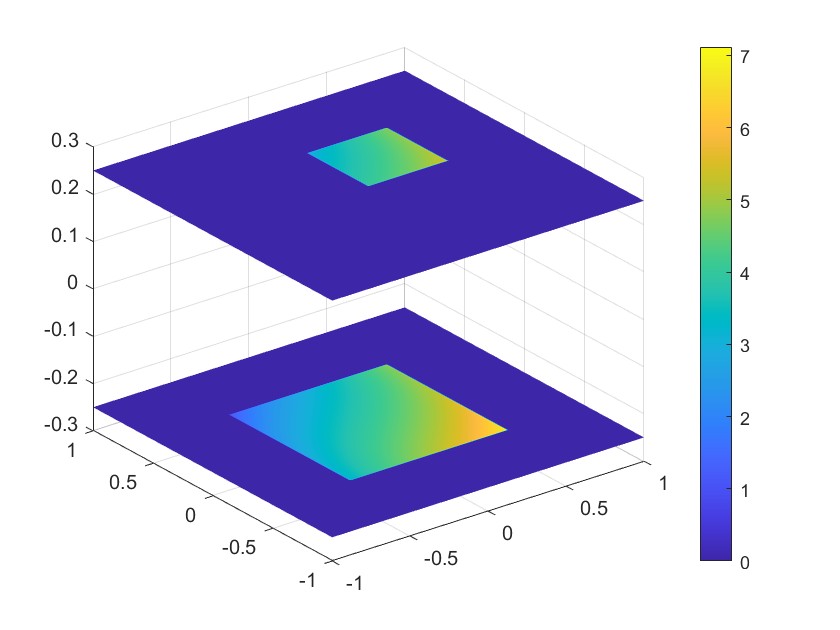}
        }
    \end{tabular}
    \caption{Slices of the true external electromagnetic source function $J=(J_1,J_2,J_3)^T$ in Example three.}
    \label{sourceJ}
\end{figure}

\begin{figure}[htbp]
   \centering
    \begin{tabular}{ccc}
        \subfigure[Slices at $z_1=\pm  0.25$, $L=51$.]{
            \label{IndE1-L_51-Omega_40}
        \includegraphics[width=0.3\textwidth]{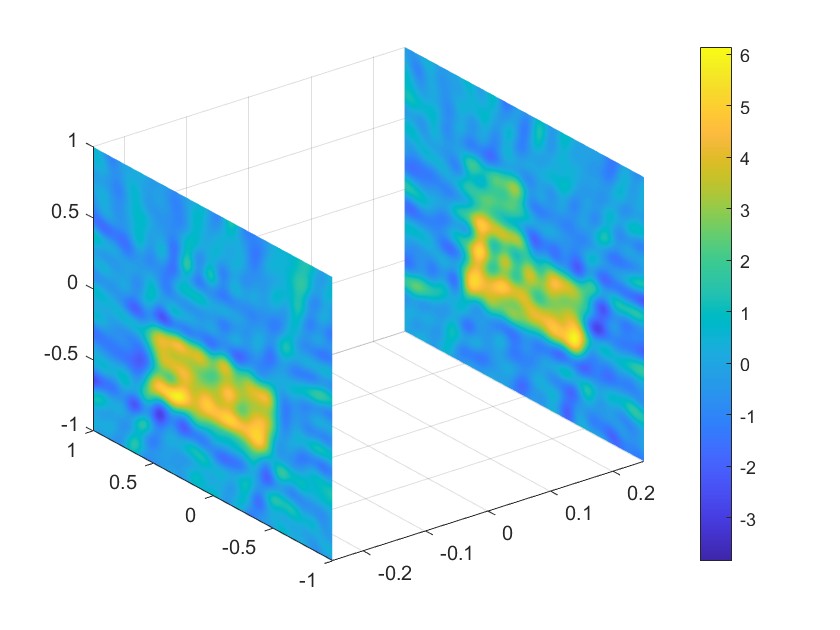}
        }&
        \subfigure[Slices at $z_1=\pm  0.25$, $L=151$.]{
            \label{IndE1-L_151-Omega_40-L}
            \includegraphics[width=0.3\textwidth]{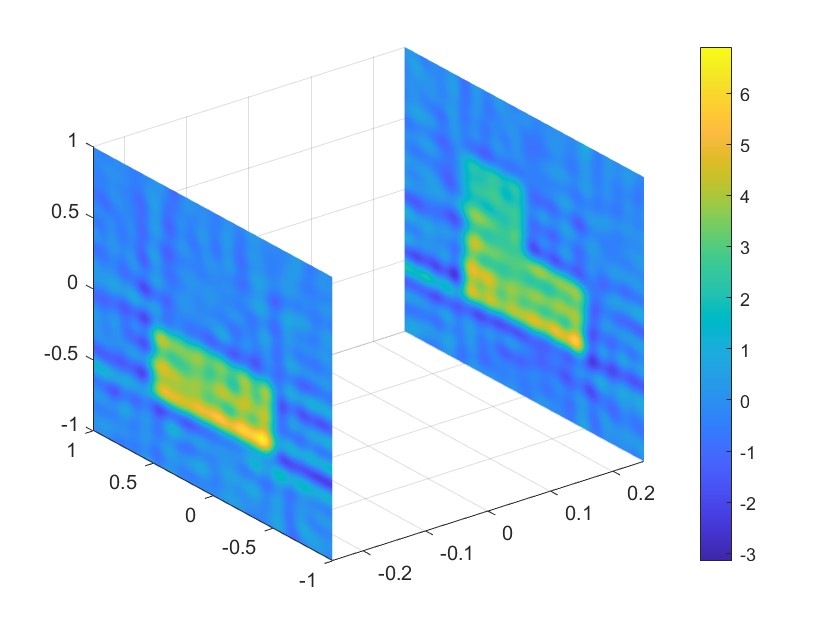}
        }&
        \subfigure[Slices at $z_1=\pm  0.25$, $L=251$.]{
            \label{IndE1-L_251-Omega_40}
            \includegraphics[width=0.3\textwidth]{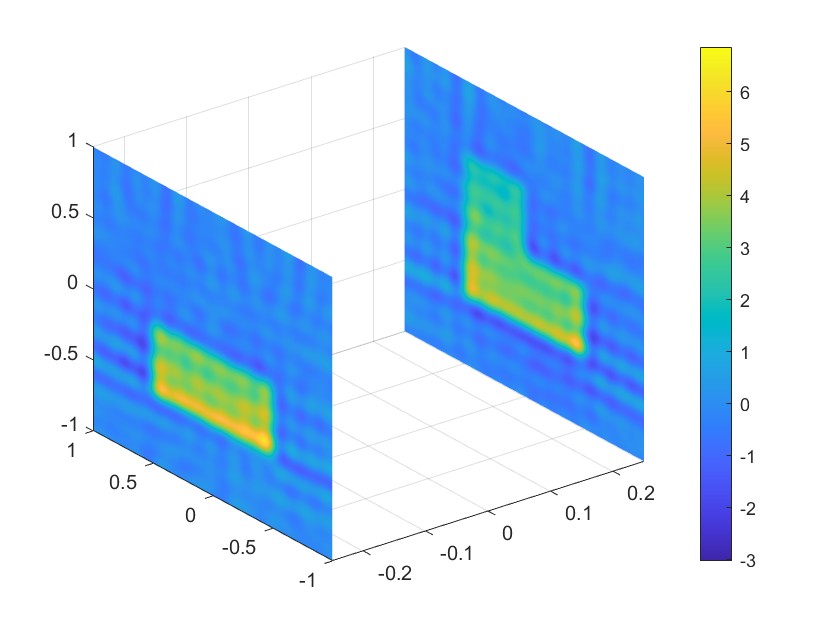}
        }\\
        \subfigure[Slices at $z_2=\pm  0.25$, $L=51$.]{
            \label{IndE2-L_51-Omega_40}
        \includegraphics[width=0.3\textwidth]{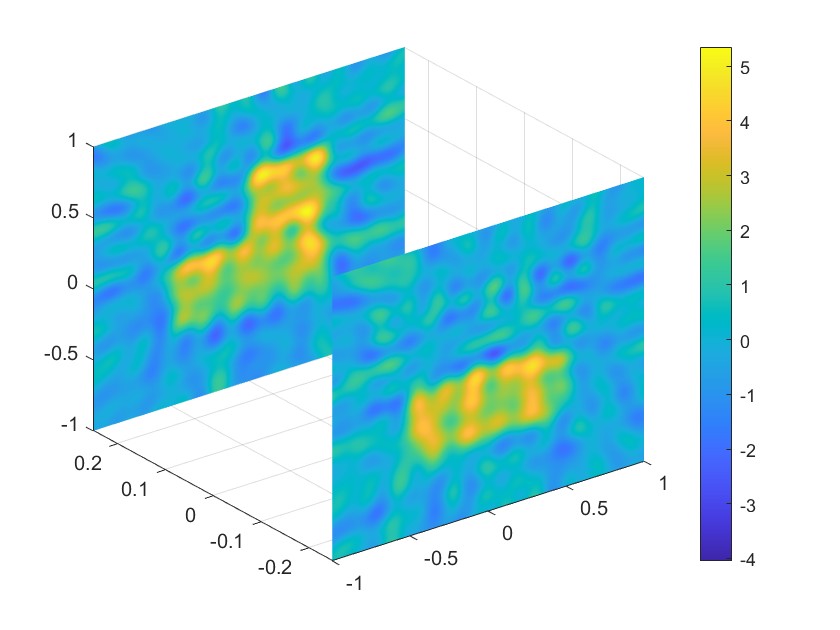}
        }&
        \subfigure[Slices at $z_2=\pm  0.25$, $L=151$.]{
            \label{IndE2-L_151-Omega_40-L}
            \includegraphics[width=0.3\textwidth]{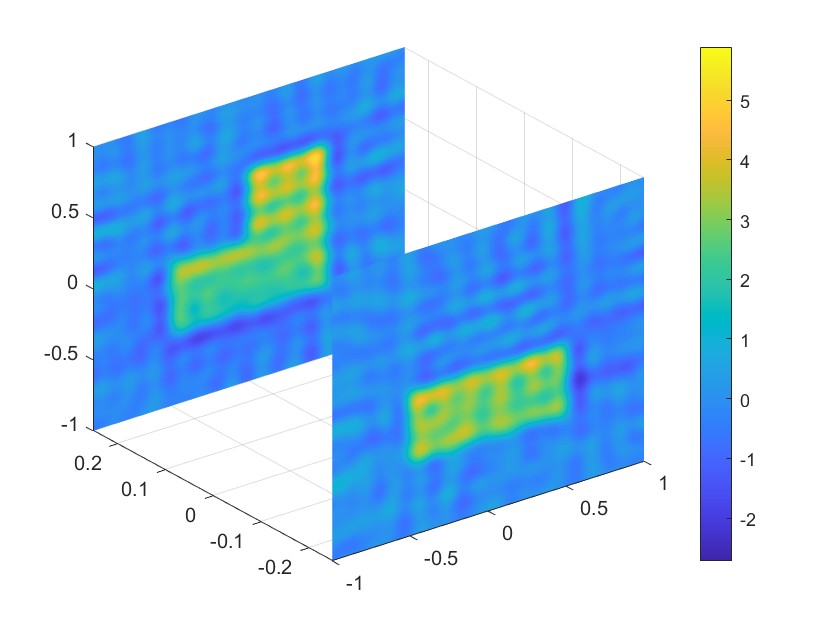}
        }&
        \subfigure[Slices at $z_2=\pm  0.25$, $L=251$.]{
            \label{IndE2-L_251-Omega_40}
            \includegraphics[width=0.3\textwidth]{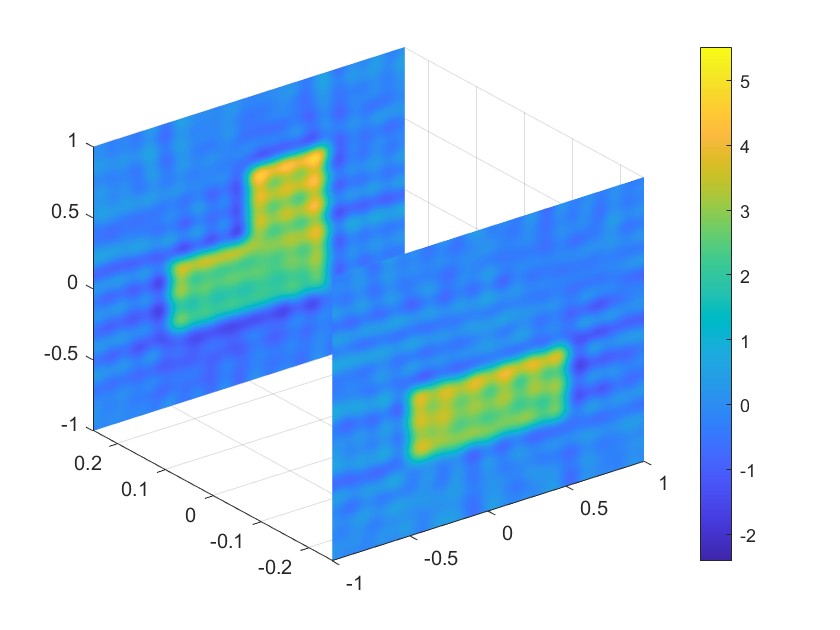}
        }\\       
        \subfigure[Slices at $z_3=\pm  0.25$, $L=51$.]{
            \label{IndE3-L_51-Omega_40}
        \includegraphics[width=0.3\textwidth]{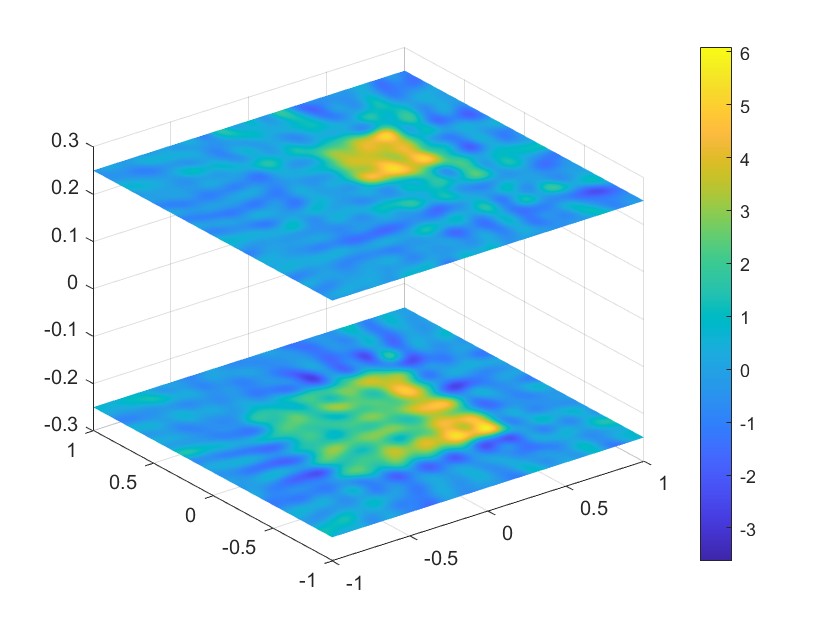}
        }&
        \subfigure[Slices at $z_3=\pm  0.25$, $L=151$.]{
            \label{IndE3-L_151-Omega_40-L}
            \includegraphics[width=0.3\textwidth]{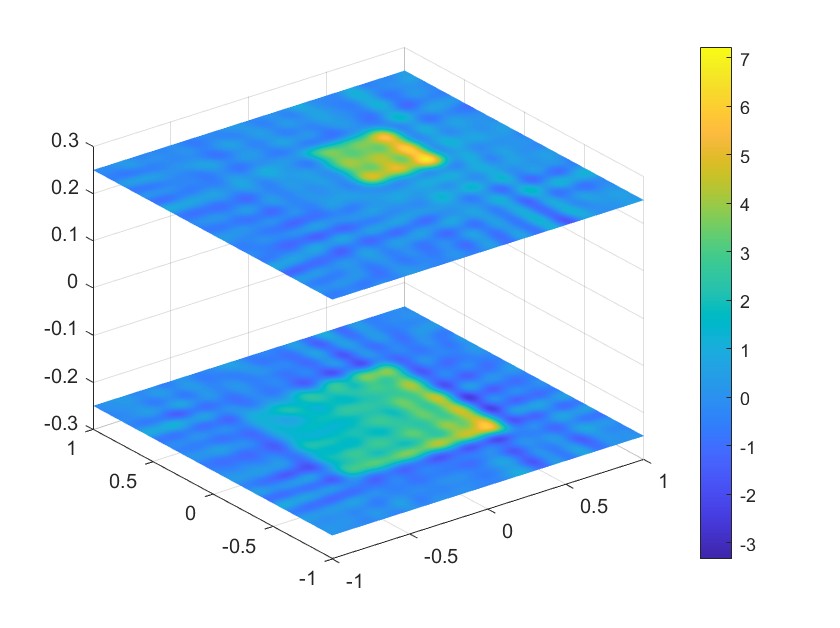}
        }&
        \subfigure[Slices at $z_3=\pm  0.25$, $L=251$.]{
            \label{IndE3-L_251-Omega_40}
            \includegraphics[width=0.3\textwidth]{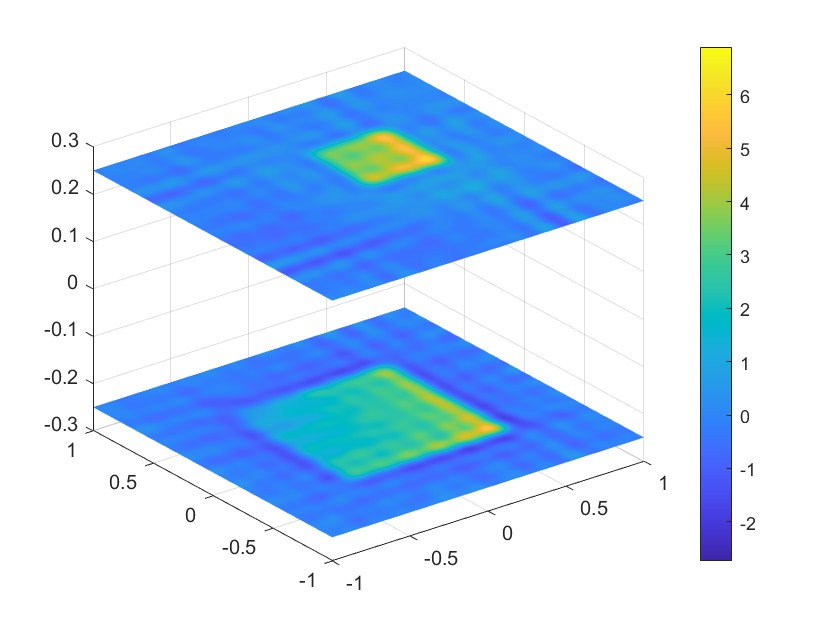}
        }
    \end{tabular}
    \caption{Reconstructions of $J=(J_1, J_2, J_3)^T$ by the indicator function $I_E$ with $k_{\Lambda}=40$ and different $L$. Top row: Reconstructions of $J_1$ by $(I_E)_{1}$. Middle row: Reconstructions of $J_2$ by $(I_E)_{2}$. Bottom row: Reconstructions of $J_3$ by $(I_E)_{3}$.}
    \label{IndE-Omega_40}
\end{figure}

\begin{figure}[htbp]
   \centering
    \begin{tabular}{ccc}
        \subfigure[Slices at $z_1=\pm  0.25$, $k_{\Lambda}=30$.]{
            \label{IndE1-L_151-Omega_30}
        \includegraphics[width=0.3\textwidth]{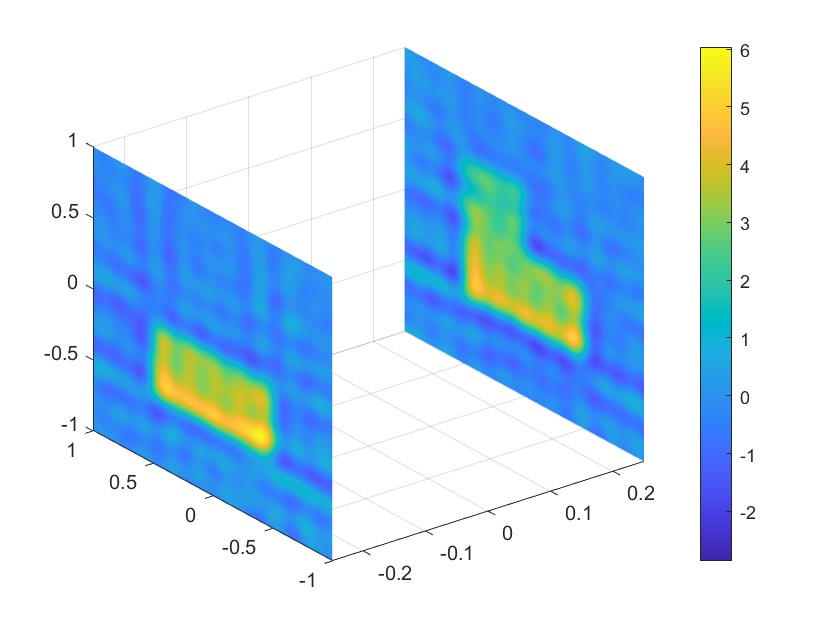}
        }&
        \subfigure[Slices at $z_1=\pm  0.25$, $k_{\Lambda}=40$.]{
            \label{IndE1-L_151-Omega_40-oemga}
            \includegraphics[width=0.3\textwidth]{figure/IndE1-N_151-Omega_40.jpg}
        }&
        \subfigure[Slices at $z_1=\pm  0.25$, $k_{\Lambda}=50$.]{
            \label{IndE1-L_151-Omega_50}
            \includegraphics[width=0.3\textwidth]{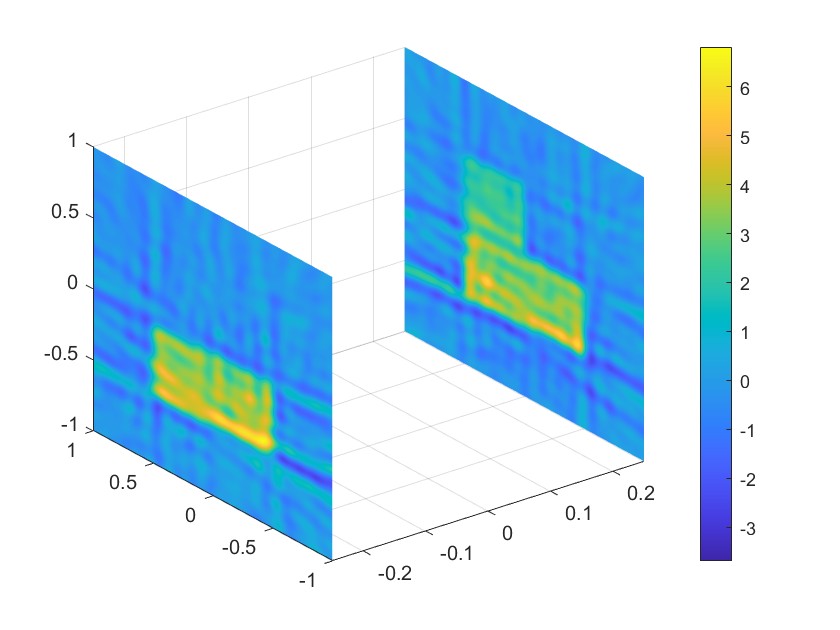}
        }\\
        \subfigure[Slices at $z_2=\pm  0.25$, $k_{\Lambda}=30$.]{
            \label{IndE2-L_151-Omega_30}
        \includegraphics[width=0.3\textwidth]{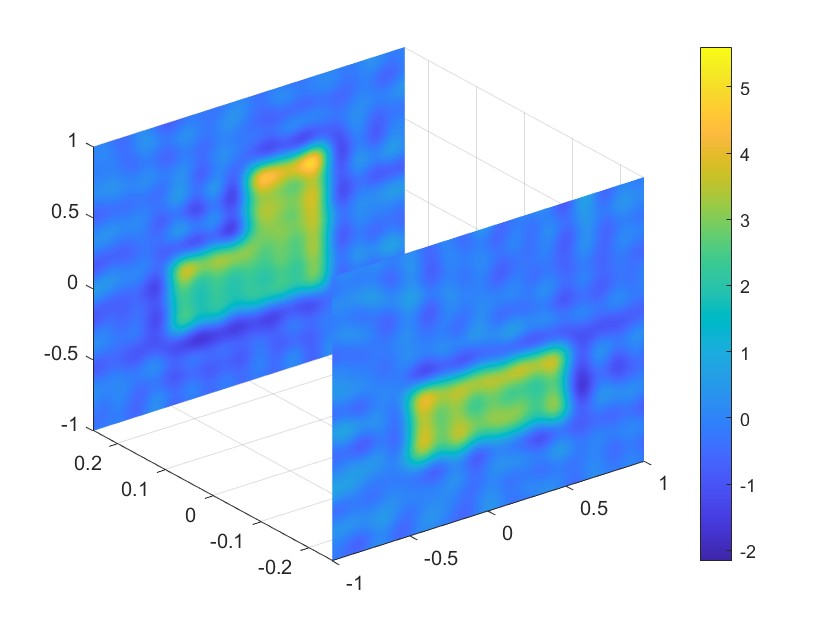}
        }&
        \subfigure[Slices at $z_2=\pm  0.25$, $k_{\Lambda}=40$.]{
            \label{IndE2-L_151-Omega_40-oemga}
            \includegraphics[width=0.3\textwidth]{figure/IndE2-N_151-Omega_40.jpg}
        }&
        \subfigure[Slices at $z_2=\pm  0.25$, $k_{\Lambda}=50$.]{
            \label{IndE2-L_151-Omega_50}
            \includegraphics[width=0.3\textwidth]{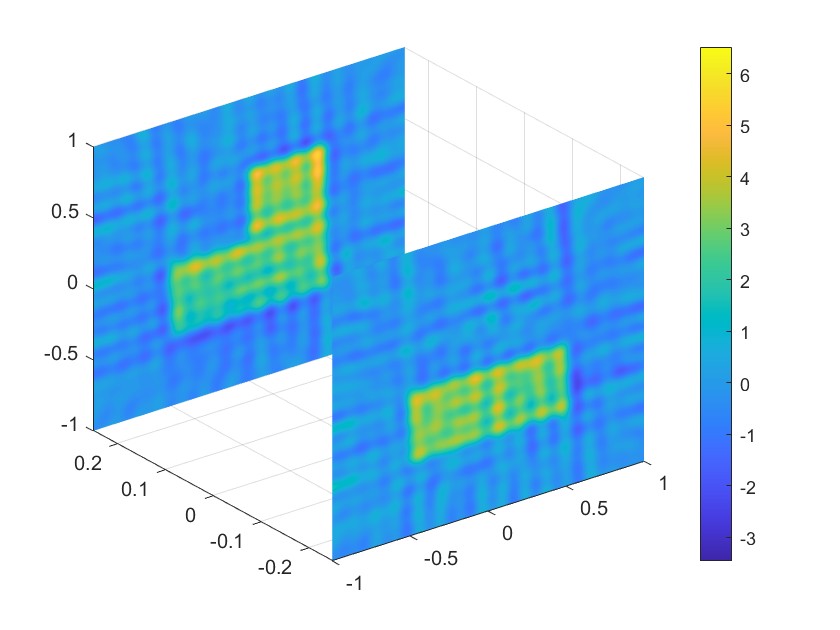}
        }\\       
        \subfigure[Slices at $z_3=\pm  0.25$, $k_{\Lambda}=30$.]{
            \label{IndE3-L_151-Omega_30}
        \includegraphics[width=0.3\textwidth]{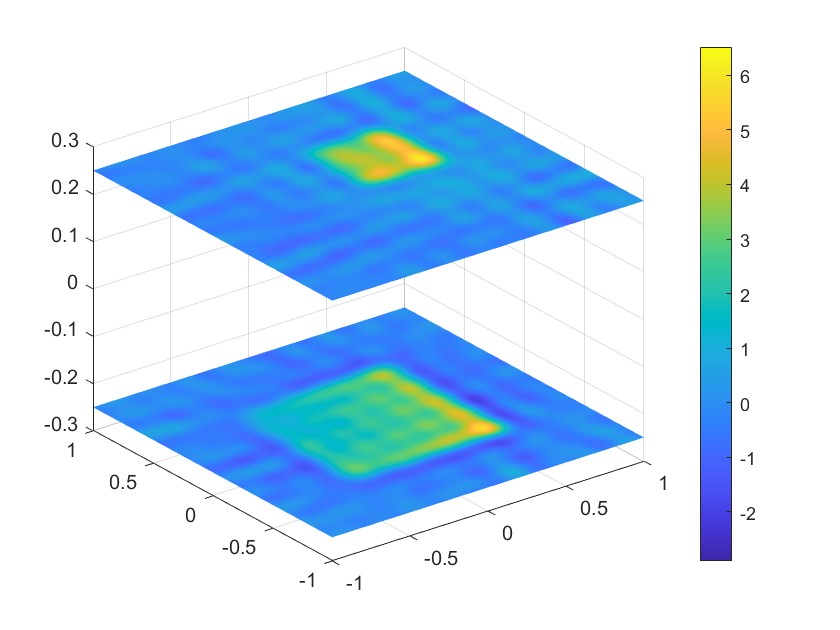}
        }&
        \subfigure[Slices at $z_3=\pm  0.25$, $k_{\Lambda}=40$.]{
            \label{IndE3-L_151-Omega_40-oemga}
            \includegraphics[width=0.3\textwidth]{figure/IndE3-N_151-Omega_40.jpg}
        }&
        \subfigure[Slices at $z_3=\pm  0.25$, $k_{\Lambda}=50$.]{
            \label{IndE3-L_151-Omega_50}
            \includegraphics[width=0.3\textwidth]{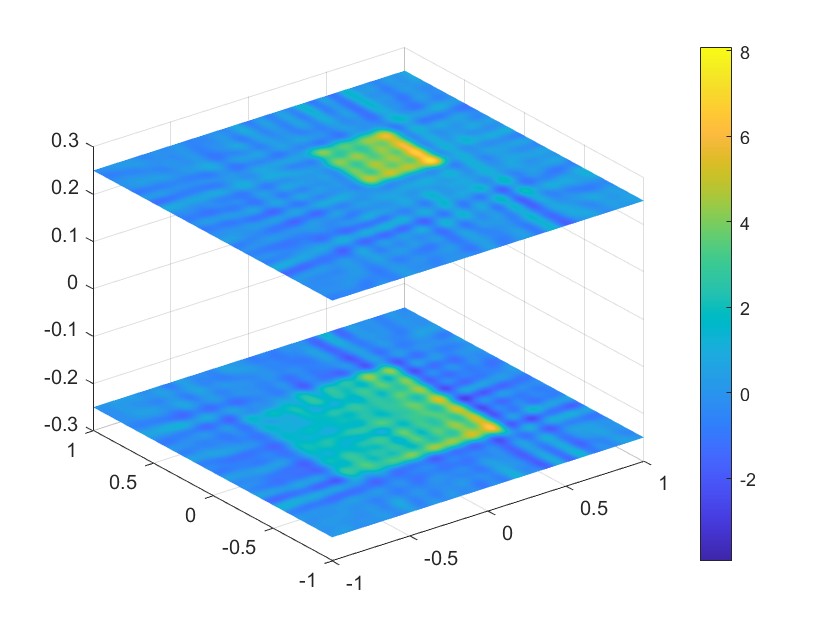}
        }
    \end{tabular}
    \caption{Reconstructions of $J=(J_1, J_2, J_3)^T$ by the indicator function $I_E$ with $L=151$ and different $k_\Lambda$. Top row: Reconstructions of $J_1$ by $(I_E)_{1}$. Middle row: Reconstructions of $J_2$ by $(I_E)_{2}$. Bottom row: Reconstructions of $J_3$ by $(I_E)_{3}$.}
    \label{IndE-L_151}
\end{figure}

\begin{itemize}
    \item \textbf{Example four: a source $J$ with nontrivial charge density $\rho\neq0$.}
\end{itemize}
In the forth example, we set $J(z)=(|z|^2-0.25)^2(1,1,1)^T,\ z\in\Omega$ with compact support $\Omega=\{z\in\mathbb R^3\big||z|\leq 0.5\}$. Straightforward calculations show that  
\ben
&{\rm div}J(z)=4(|z|^2-0.25)(z_1+z_2+z_3),\quad z\in\Om,\\
&{\rm curl}J(z)=4(|z|^2-0.25)(z_2-z_3, z_3-z_1, z_1-z_2)^T,\quad z\in\Om.
\enn
Obviously, ${\rm div}J=0$ in $\Om$ if and only if $|z|=0.5$ or $z_1+z_2+z_3=0$. 
We refer to the slices of true $J$ and ${\rm curl}J$, respectively, in the first and second rows of Figure \ref{IndH-L_151-Lambda_40}.
As shown in the third row of Figure \ref{IndH-L_151-Lambda_40}, we obtain a quite stable and high resolution reconstruction of ${\rm curl}\ J$ by the indicator $I_H$.
Moreover, by comparing the first and third rows of Figure \ref{IndH-L_151-Lambda_40} we observe that a rough reconstruction of the support $\Omega$ can be obtained by plotting $I_H$.
\begin{figure}[htbp]
   \centering
    \begin{tabular}{ccc}
     \subfigure[True $J_1$  at $z_1=0$.]{
            \label{J1inex4}
        \includegraphics[width=0.3\textwidth]{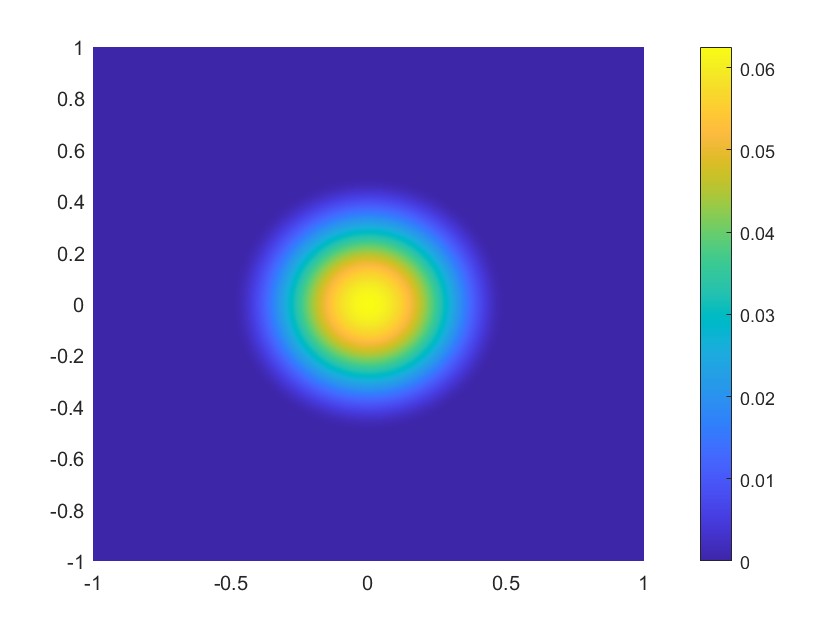}
        }&
        \subfigure[True $J_2$  at $z_2=0$.]{
            \label{J2inex4}
            \includegraphics[width=0.3\textwidth]{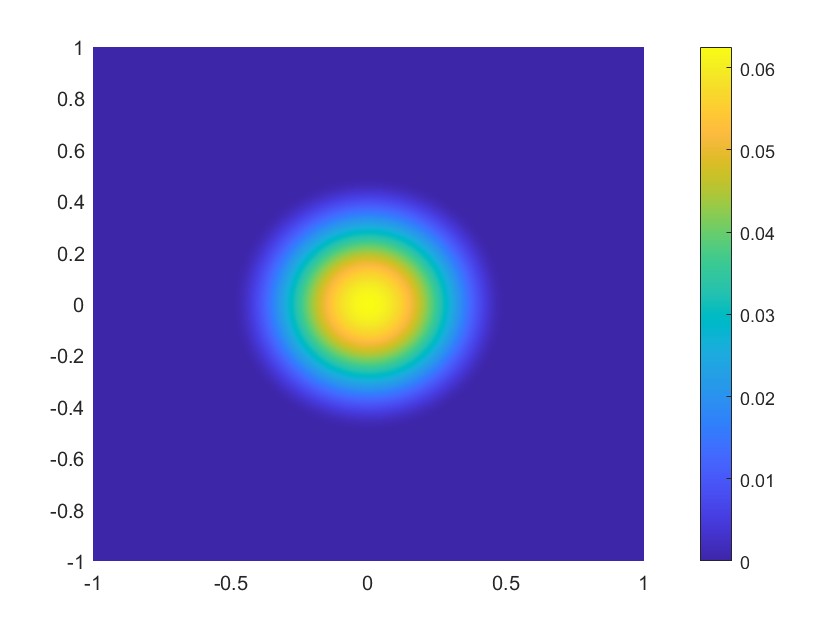}
        }&
        \subfigure[True $J_3$  at $z_3=0$.]{
            \label{J3inex4}
            \includegraphics[width=0.3\textwidth]{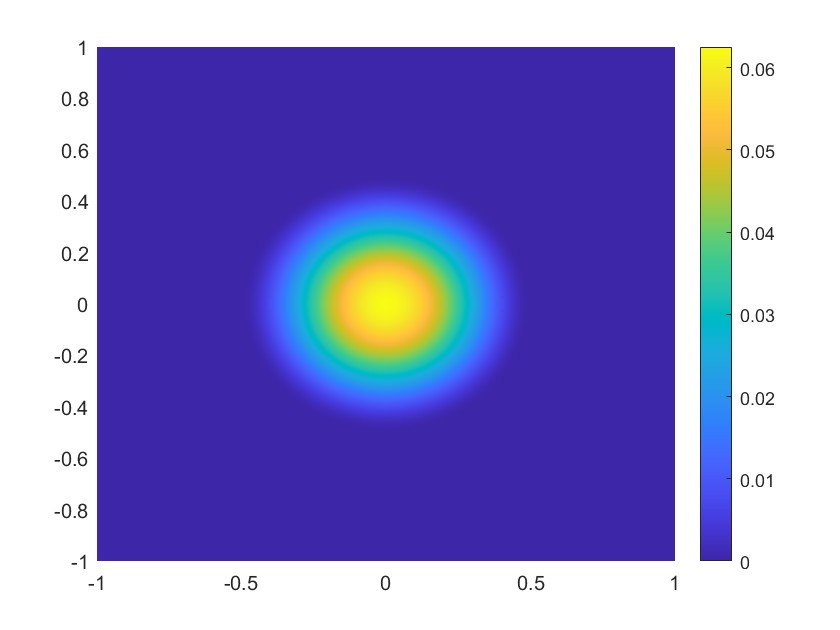}
        }\\
        \subfigure[True $({\rm curl}\ J)_1$  at $z_1=0$.]{
            \label{curlJ1}
        \includegraphics[width=0.3\textwidth]{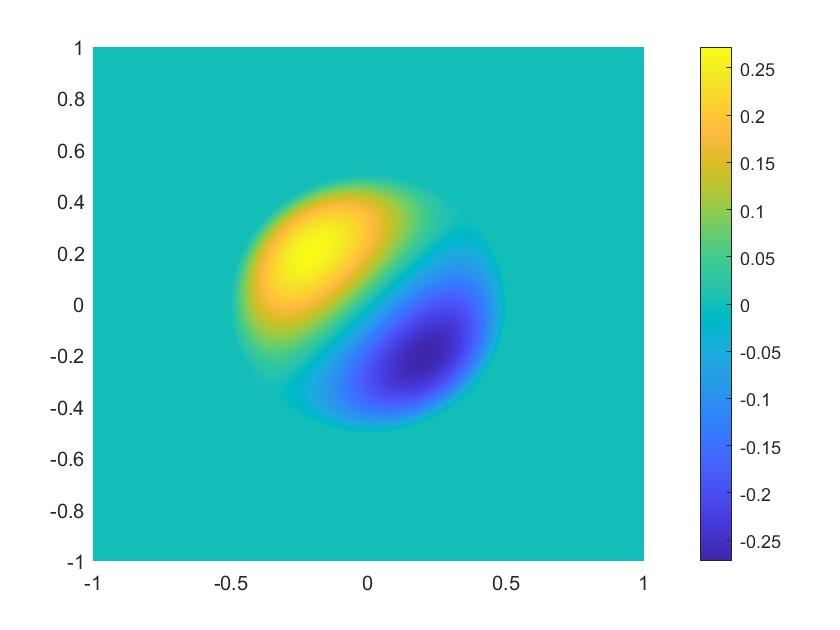}
        }&
        \subfigure[True $({\rm curl}\ J)_2$  at $z_2=0$.]{
            \label{curlJ2}
            \includegraphics[width=0.3\textwidth]{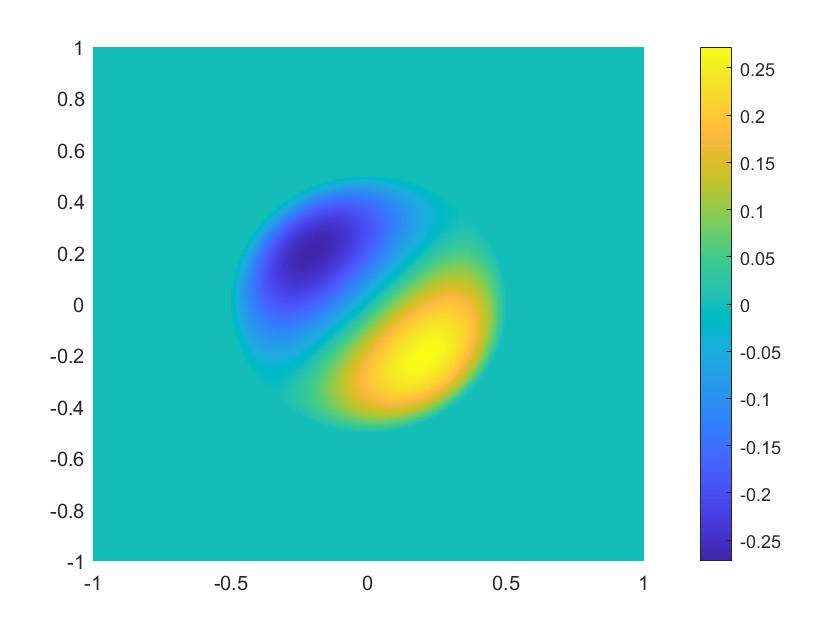}
        }&
        \subfigure[True $({\rm curl}\ J)_3$  at $z_3=0$.]{
            \label{curlJ3}
            \includegraphics[width=0.3\textwidth]{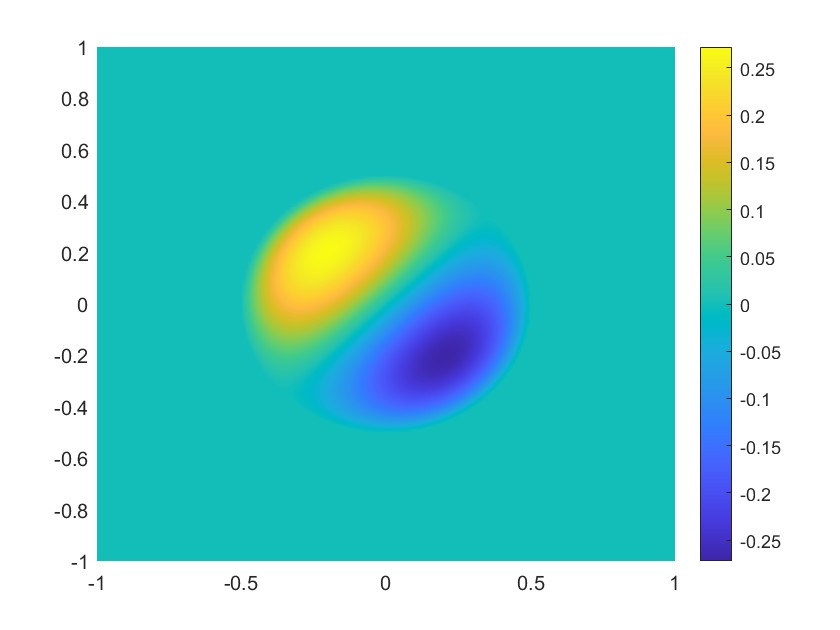}
        }\\
        \subfigure[Slice of $(I_H)_1$ at $z_1=0$.]{
            \label{IndH1-L_151-Omega_40}
        \includegraphics[width=0.3\textwidth]{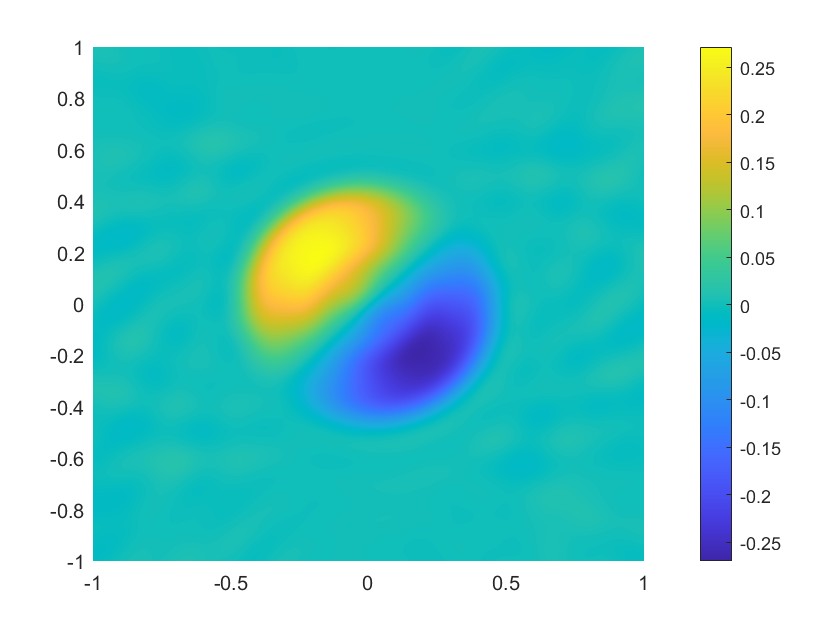}
        }&
        \subfigure[Slice of $(I_H)_2$ at $z_2=0$.]{
            \label{IndH2-L_151-Omega_40}
            \includegraphics[width=0.3\textwidth]{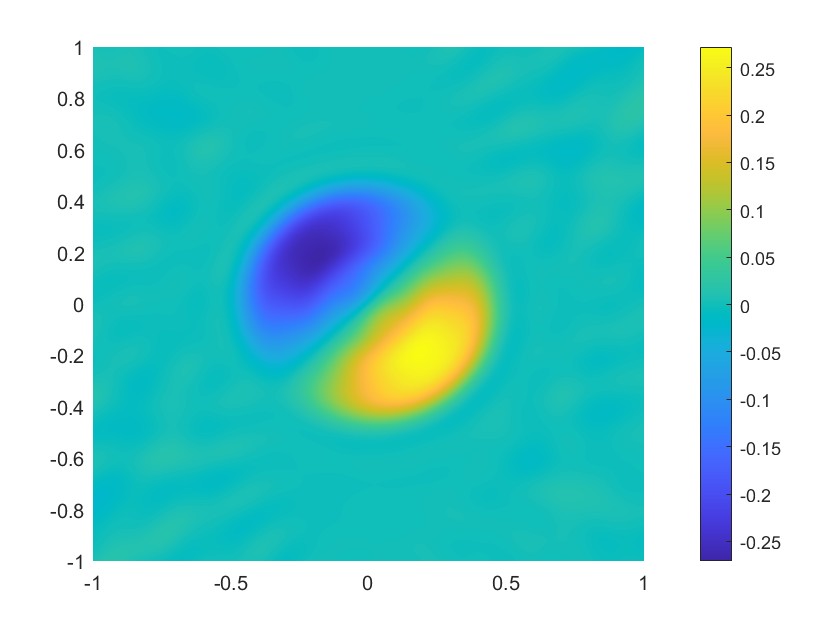}
        }&
        \subfigure[Slice of $(I_H)_3$ at $z_3=0$.]{
            \label{IndH3-L_151-Omega_40}
            \includegraphics[width=0.3\textwidth]{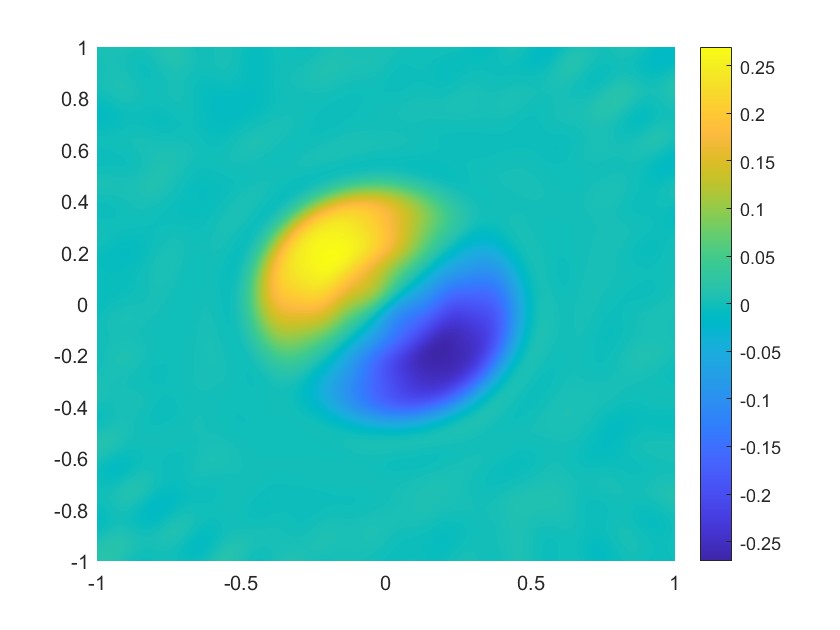}
        }
    \end{tabular}
    \caption{Reconstruction of ${\rm curl} J$ by $I_H$ with $L=151$ and $k_\Lambda=40$. }
    \label{IndH-L_151-Lambda_40}
\end{figure}


\section*{Acknowledgment}
The research of X. Liu is supported by the National Key R\&D Program of China under grant 2024YFA1012303 and the NNSF of China under grant 12371430.

\bibliographystyle{SIAM}

\end{document}